\begin{document}

\begin{doublespace}

\newtheorem{thm}{Theorem}[section]
\newtheorem{lemma}[thm]{Lemma}
\newtheorem{cond}[thm]{Condition}
\newtheorem{defn}[thm]{Definition}
\newtheorem{prop}[thm]{Proposition}
\newtheorem{corollary}[thm]{Corollary}
\newtheorem{remark}[thm]{Remark}
\newtheorem{example}[thm]{Example}
\newtheorem{conj}[thm]{Conjecture}
\numberwithin{equation}{section}
\def\ee{\varepsilon}
\def\qed{{\hfill $\Box$ \bigskip}}
\def\NN{{\cal N}}
\def\AA{{\cal A}}
\def\MM{{\cal M}}
\def\BB{{\cal B}}
\def\CC{{\cal C}}
\def\LL{{\cal L}}
\def\DD{{\cal D}}
\def\FF{{\cal F}}
\def\EE{{\cal E}}
\def\QQ{{\cal Q}}
\def\RR{{\mathbb R}}
\def\R{{\mathbb R}}
\def\L{{\bf L}}
\def\K{{\bf K}}
\def\S{{\bf S}}
\def\A{{\bf A}}
\def\E{{\mathbb E}}
\def\F{{\bf F}}
\def\P{{\mathbb P}}
\def\N{{\mathbb N}}
\def\eps{\varepsilon}
\def\wh{\widehat}
\def\wt{\widetilde}
\def\pf{\noindent{\bf Proof.} }
\def\beq{\begin{equation}}
\def\eeq{\end{equation}}
\def\lam{\lambda} \def\la{\lambda}
\def\H{\mathcal{H}}
\def\nn{\nonumber}
\def\L{\mathcal{L}}
\def\WU{\mathrm{WUSC}(\overline{\alpha}, \overline{\theta}, \overline{C})}
\def\WL{\mathrm{WLSC}(\underline{\alpha},\underline{\theta},\underline{c})}

%%%% XY
\def\us{{\rm{WUSC}}}
\def\ls{{\rm{WLSC}}}
\def\dimb{\overline{\dim}_{_{\rm B}}}
\def\dimh{\dim_{_{\rm H}}}
\def\dimp{\dim_{_{\rm P}}}
\def\ov{\overline}
\def\un{\underline}
\def\al{\alpha}
\def\de{\delta}

\newcommand{\Per}{\mathrm{Per}}
\newcommand{\norm}[1]{{\lVert #1 \rVert}}

\title{\Large \bf  Uniform dimension results for the inverse 
images of symmetric L\'evy processes}
\author{Hyunchul Park, Yimin Xiao, and Xiaochuan Yang}

\date{ \today}
\maketitle

\begin{abstract}
We prove uniform Hausdorff and packing dimension results for the inverse
images of a large class of real-valued symmetric L\'evy processes. Our
main result for the Hausdorff dimension extends that of Kaufman (1985)
for Brownian motion and that of Song, Xiao, and Yang (2018) for $\alpha$-stable
L\'evy processes with $1<\alpha<2$. Along the way we also prove an upper
bound for the uniform modulus of continuity of the local times of these
processes.
\end{abstract}

\section{Introduction}\label{introduction}
%moved below
%Let $X=\{X(t), t\ge 0, \P^x\}$ be a real-valued L\'evy process with L\'evy exponent
%$\psi$, that is $\E^0[e^{i\la X(t)}]=e^{-t\psi(\la)}$.
%As it is typically the case, the sample paths of $X$ are irregular, so the inverse images,
%in particular the level sets, are of a fractal nature. It is thus of interest to determine
%the fractal dimension of the set $X^{-1}(F)$, where $F$ is a Borel subset of $\RR$. 
%new
The inverse images of L\'evy processes have been studied intensively. 
When $F=\{y\}$ is a single set, the question whether $\P(X^{-1}(F)\neq\emptyset)=0$ 
boils down to whether $y$ is polar for $X$, which is an important question in potential theory.
%As the sample paths of $X$ are irregular, so the inverse images,
As the sample paths of $X$ are typically irregular, so the inverse images,
in particular the level sets, are of a fractal nature. It is thus of interest to determine
the fractal dimension of the set $X^{-1}(F)$, where $F$ is a Borel subset of $\RR$. 
%end new
The problems for determining the Hausdorff dimension and capacity of $X^{-1}(F)$  (when
$F \subset \RR$ is fixed) were studied by Hawkes \cite{Hawkes71b, H98} for  strictly
$\al$-stable L\'evy processes in $\RR$, and by Khoshnevisan and Xiao \cite{KX05} for
general L\'evy processes. Their methods are based on potential theory of L\'evy processes.
See Taylor \cite{Tay86} and Xiao \cite{X2004} for further information on fractal properties
of L\'evy processes, where several interesting open problems have remained unsolved.

Motivated by the research in \cite{Hawkes71b, H98,KX05}, an interesting question
would be to provide a dimension formula for $X^{-1}(F)$ that holds simultaneously for
all Borel sets $F \subset \R$. Such a stronger statement, if it is proved to hold, is
customarily referred to as a uniform dimension result. The uniform dimension result 
has a wide applicability because it allows $F$ to be random and dependent of the 
sample paths of $X$. For example, it allows us to compute $\dimh X^{-1}(F)$ when 
$F = X(E)$, where $E \subset [0, \infty)$ is a Borel set. More interestingly, the uniform 
dimension result is useful for 
%Indeed, $F$ of interest in applications may be dependent of the sample paths of $X$, 
%a situation arising naturally in 
the multifractal analysis of stochastic processes.  
%For instance, \cite{MA,HT,SY} studied 
For instance, \cite{HT,MA,SY} studied 
the multifractal structures of the local times at 0, denoted by $\{L(0, t), t \ge 0\}$, 
of a L\'evy process $X = \{X(t), t\ge 0\}$ that hits points. See Section 4 for the definition 
and more information of local times of L\'evy processes. An open problem is to investigate the 
multifractal structures of the local time processes $\{L(x, t), x \in \R \}$ (when $t$ is fixed) 
or  $\{L(x, t), t \ge 0, x \in \R \}$. For any result on the fractal dimension of the set $F$ of points 
$x$ where $L(x, t)$ has certain (fast or slow) oscillation behavior, one can use the uniform 
dimension result for the inverse images to derive the fractal dimension of the corresponding 
set of times  $X^{-1}(F)$. This is worthy to pursue, but is beyond the scope of this paper. 
 
%In particular, such a result can be applied to the setting in \cite{MA} to determine the fractal 
%dimension of the set of times $t$ such that the image points $X(t)$ on the sample path have 
%certain specified multifractal spectrum for the local time. We leave the verification of this application 
%to an interested reader.

The uniform dimension result for the level set $X^{-1}(x)$ is also fundamental in the geometric 
construction of the local times. Indeed,  Barlow, Perkins and Taylor \cite{BPT} showed that a 
%class of L\'evy processes $\{X(t), t\ge 0\}$ with local times $\{L(x, t), x\in \R, t\ge 0\}$ satisfy
class of L\'evy processes $\{X(t), t\ge 0\}$ with local times $\{L(x, t), x\in \R, t\ge 0\}$ satisfies
\begin{align*}
%\P\big\{L (x, t) =  \mathcal H^{\phi}([0,t]\cap \{s: X(s)=x\}) \mbox{ for all } x\in\R, t\ge 0\big\}=1,
\P\left( L (x, t) =  \mathcal H^{\phi}([0,t]\cap \{s: X(s)=x\}) \mbox{ for all } x\in\R, t\ge 0\right)=1,
\end{align*}
where $\phi$ is a sort of gauge function of the level sets. In other words, the local times of $X$ can be 
obtained level-wise by computing certain Hausdorff measure of the level sets. Such a construction does 
not make sense if the uniform Hausdorff dimension result for the inverse images does not hold.

%This paper is concerned with the uniform Hausdorff and packing dimension result for
This paper is concerned with the uniform Hausdorff and packing dimension results for
the inverse images of a symmetric L\'evy process, and continues 
the recent investigation of Song, Xiao, and Yang \cite{SXY}, who proved a uniform Hausdorff 
dimension result for the inverse images of an $\al$-stable L\'evy process with $1<\al<2$, and the
classical result of Kaufman \cite{K} for Brownian motion. Let us recall their results.

\begin{thm}[{\cite{K,SXY}}] Let $X = \{X(t), t\ge 0\}$ be a strictly $\al$-stable L\'evy process 
with $1<\al\le 2$. For any $x\in\RR$,
\begin{align*}
\mathbb{P}^x\left(\dimh X^{-1}(F)= 1-\frac{1}{\alpha} +\frac{\dimh F}
{\alpha}
\mbox{ for all Borel sets } F \subseteq \R \right)=1.
\end{align*}
\end{thm}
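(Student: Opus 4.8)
The plan is to prove the two inequalities $\dimh X^{-1}(F) \le 1 - \tfrac{1}{\alpha} + \tfrac{\dimh F}{\alpha}$ and $\dimh X^{-1}(F) \ge 1 - \tfrac{1}{\alpha} + \tfrac{\dimh F}{\alpha}$ separately, uniformly over all Borel sets $F$. The upper bound is the harder, genuinely uniform half, so I will address it first. The standard non-uniform upper bound would follow from a covering argument using the Hausdorff dimension of $F$, but to make it hold simultaneously for all $F$ I would establish a \emph{uniform modulus of continuity} for $X$ together with a uniform control on how the process fills space. Concretely, the idea is to show that, almost surely, for every interval $I\subset[0,T]$ of length $r$ the image $X(I)$ has diameter comparable to $r^{1/\alpha}$ up to a logarithmic factor (this is where the symmetry and the $\alpha$-stable scaling enter), and conversely that the sojourn of $X$ near any point is controlled. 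From such a two-sided uniform oscillation estimate one transfers a cover of $F$ by sets of diameter $\rho_j$ into a cover of $X^{-1}(F)$ by time-intervals of length $\approx \rho_j^{\alpha}$, and counting these intervals yields the exponent $1-\tfrac1\alpha+\tfrac{\dimh F}{\alpha}$.

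For the lower bound the natural route is a capacity / energy argument. Given $F$ with $\dimh F > s$, I would choose a probability measure $\mu$ on $F$ with finite $s$-energy (Frostman's lemma), and construct a random measure $\nu$ on $X^{-1}(F)$ by pulling $\mu$ back through the local times, i.e. roughly $\nu(dt) = L(dx,\cdot)$ integrated against $\mu(dx)$, using the occupation density. The key step is then to bound the expected $\beta$-energy $\E\!\int\!\int |t-u|^{-\beta}\,\nu(dt)\,\nu(du)$ for $\beta < 1-\tfrac1\alpha+\tfrac{s}{\alpha}$, which reduces to estimating the two-point function $\E[L(x,dt)L(y,du)]$ in terms of the transition densities of the stable process. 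Finiteness of this energy gives $\dimh X^{-1}(F)\ge \beta$ for that fixed $F$; upgrading to a uniform statement requires a covering argument over a countable dense family of measures or, more robustly, deriving the lower bound from a uniform lower density estimate for the local times that holds simultaneously at all levels $x$.

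The main obstacle, and the real content distinguishing this from the fixed-$F$ case, is the uniformity over all Borel sets $F$ \emph{with the same exceptional null set}. For a fixed $F$ one can discard a $\P$-null event depending on $F$, but a uniform result forbids this, so I expect the crux to be an almost-sure, deterministic-in-$F$ oscillation estimate: something of the form that, outside a single null set, for all small $r$ and all $t$ the increment $X(t+r)-X(t)$ lies in $[c_1 (r\log(1/r))^{1/\alpha}, c_2 (r \log(1/r))^{1/\alpha}]$-type windows, with matching control on the local-time modulus. This is precisely why the paper announces \emph{an upper bound for the uniform modulus of continuity of the local times} in the abstract: that modulus result is the engine that converts fixed-$F$ dimension bounds into uniform ones. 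I would therefore structure the proof so that the uniform local-time modulus is proved first as a standalone estimate, and then both the upper and lower dimension bounds are read off from it; the delicate point will be matching the logarithmic corrections on both sides so that they cancel in the final dimension (which has no $\log$ terms), and handling the boundary case $\alpha=2$ where $X$ is Brownian motion and Kaufman's original argument applies.
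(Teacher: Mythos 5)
There is a genuine gap, and it sits exactly where you located the difficulty: the engine you propose for the uniform \emph{upper} bound does not exist for these processes. Your plan rests on an almost-sure two-sided oscillation estimate --- that for all small $r$ and all $t$ the increment $X(t+r)-X(t)$ lies in a window $[c_1(r\log(1/r))^{1/\alpha},\,c_2(r\log(1/r))^{1/\alpha}]$, so that $X(I)$ has diameter $\asymp r^{1/\alpha}$ for every interval $I$ of length $r$. Both halves of this are false. For $\alpha<2$ the process has jumps of non-vanishing size, so $\sup_{|t-s|\le r}|X(t)-X(s)|$ does not even tend to $0$ as $r\to 0$: any interval containing a large jump time has image of macroscopic diameter, and no uniform modulus of continuity of $X$ holds. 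The lower window fails too, for every $\alpha\le 2$ including Brownian motion: increments over an interval can be arbitrarily small (the process returns to its starting value), so one cannot force a cover of $F$ by sets of diameter $\rho$ to pull back to time-intervals of length $\gtrsim\rho^\alpha$ by a pathwise oscillation bound. What the paper uses instead for the upper bound is purely probabilistic, not pathwise: a covering principle (Lemma \ref{l:cov_pr}, after \cite[Lemma 2.2]{SXY}) showing via the hitting-time estimates of Grzywny--Ryznar (Theorem \ref{t:gr}) and a transition-density lower bound (Theorem \ref{t:bgr}) that the return probability $p_n$ of the rescaled process to a unit interval satisfies $1-p_n\ge C_T\,2^{-n(\alpha-1)}$; hence the chance that some dyadic interval $U$ of length $2^{-n}$ needs more than $2^{n\delta}$ covering intervals of length $2^{-n\alpha}$, with $\delta>\alpha-1$, is summable, and Borel--Cantelli gives the covering simultaneously for \emph{all} $U$ outside one null set. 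That is the single-null-set mechanism for the upper bound; no modulus of continuity of $X$ enters. Relatedly, you misattribute the role of the local-time modulus announced in the abstract: it drives only the \emph{lower} bound, not both bounds.

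Your lower-bound sketch is essentially the paper's argument in spirit --- take a Frostman measure $\mu$ on $F$ and push it onto $X^{-1}(F)$ via the occupation density, $\lambda(\cdot)=\int L(x,\cdot)\,\mu(dx)$ --- but the energy/two-point-function route you lead with only handles a fixed $F$, and your suggested upgrade via ``a countable dense family of measures'' does not work: Frostman measures of arbitrary Borel sets are not approximable within any countable family in a way that preserves the null set. The fallback you mention at the end, an almost-sure estimate on local times valid simultaneously at all levels, is the correct idea, except that what is needed is a uniform \emph{upper} H\"older bound on the maximum local time, $\sup_x L(x,I)\le c\,|I|^{1-1/\alpha}(\log(1/|I|))^{1/\alpha}$ for all intervals $I\subseteq[0,N]$ outside one null set (Theorem \ref{thm:HolderLC}), combined with a one-sided image-covering lemma (Lemma \ref{lemma:covering2}: the image of each dyadic interval of length $2^{-n}$ is covered by a bounded number of intervals of length $2^{-n\gamma}$ for any $\gamma<1/\alpha$ --- the correct jump-tolerant substitute for your false diameter bound). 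Together these give $\lambda(B)\le \mathrm{diam}(B)^{1-1/\alpha+\gamma\dimh F-2\eps}$ deterministically in $F$, and Frostman's lemma finishes. Finally, your worry about matching logarithmic corrections is a red herring: the logs are absorbed by letting $\delta\downarrow\alpha-1$, $\gamma\uparrow 1/\alpha$, $\eps\downarrow 0$, since Hausdorff dimension is insensitive to them.
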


%Our goal in the present paper is two fold.
Our goal in the present paper is twofold.
First, the proof of \cite{SXY} relies on the exact scaling property
satisfied by strictly stable  L\'evy  processes. We intend to show that
certain weak asymptotic behavior of the characteristic exponent $\psi$ (see below) 
suffices to derive the uniform dimension result and such asymptotic behavior
holds for many interesting examples. Second, we consider not only Hausdorff
dimension but also packing dimension, thus provide two ``dual" descriptions for 
the fractal behavior of the inverse images of L\'evy processes.

%moved from above
Let $X=\{X(t), t\ge 0, \P^x\}$ be a real-valued L\'evy process with characteristic (or L\'evy) 
%exponent $\psi$, that is $\E^0[e^{i\la X(t)}]=e^{-t\psi(\la)}$.
exponent $\psi$, that is, $\E^0[e^{i\la X(t)}]=e^{-t\psi(\la)}$.
%end move
Throughout the paper, $X$ is assumed to be symmetric, that is, $X$ and
$-X$ have the same distribution under $\P^0$. 
Consequently, $\psi$ is
real-valued and has the L\'evy-Khintchine representation
$\psi(\la)=\frac12 A \la^2 + 2\int_0^\infty \left(1-\cos(x\la)\right) \nu(dx)$,
where $A\ge 0$ is the variance parameter of the Gaussian part of $X$ and
$\nu$ is called the L\'evy measure satisfying $\nu(\{0\})=0$ and  the integrability condition
$\int_{\R\setminus\{0\}} (1\wedge x^2) \nu(dx)<\infty$.
%We suppose that $X$ is a pure-jump L\'evy process, that is $A=0$.  
We suppose that $X$ is a pure-jump L\'evy process, i.e., $A=0$.  

Recall that a Borel measure
$\mu$ on $\RR$ is called unimodal with mode $a$ if $\mu=c\de_a+f(x)dx$
with $c\ge 0$ and $f$ increasing on $(-\infty,a)$, decreasing on $(a,\infty)$.
Unimodality is a time-dependent property for general L\'evy processes.
However, in the symmetric case, it is known that the distribution of
$X(t)$ is unimodal for all $t>0$ if and only if the L\'evy measure
$\nu$ of $X$ is unimodal \cite[p.~488]{W}. In such case, it makes
sense to say that $X$ is a unimodal process.  We refer to \cite{Sato}
for systematic accounts on L\'evy processes.

In order to state our main result, we recall the weak
scaling conditions introduced in \cite{BGR}. 

\begin{defn}\label{def:BGR} 
We say that  $\psi: [0,\infty)\to[0,\infty)$ satisfies the weak lower
scaling condition at infinity if
there exist constants $\underline{\alpha}\in \R$, $\underline{\theta}\geq 0$,
and $\underline{c}\in (0,1]$ such that
$$
\psi(\lam\theta)\geq \underline{c} \lam^{\underline{\alpha}}\psi(\theta),
\quad \lam\geq 1, \theta>\underline{\theta},
$$
and write $\psi\in \ls(\underline{\alpha},\underline{\theta},\underline{c})$.
We say $\psi$ satisfies the weak upper scaling condition at infinity  if there exist
$\overline{\alpha}\in \R$, $\overline{\theta}\geq 0$, and $\overline{C}\in[1,\infty)$
such that
$$
\psi(\lam\theta)\leq \overline{C} \lam^{\overline{\alpha}}\psi(\theta),
\quad \lam\geq 1, \theta>\overline{\theta},
$$
and write $\psi\in \us(\overline{\alpha}, \overline{\theta}, \overline{C})$.

If $\underline{\theta}=0$ we say that $\psi$ satisfies the global weak
lower scaling condition and the $\overline{\theta}=0$
case is called the global weak upper scaling condition.
\end{defn}

It is clear that  $\underline{\alpha} \le \overline{\alpha}$ in Definition 
\ref{def:BGR}. The following Theorems \ref{Thm:H} and  \ref{Thm:P} are the main 
results of this paper.  Theorem \ref{Thm:H} extends \cite[Th.~1.1]{SXY}, and 
Theorem \ref{Thm:P} is new even for Brownian motion. 
We use $\dimh$ and $\dimp$ to denote the
Hausdorff dimension and the packing dimension of a set, respectively.

\begin{thm} \label{Thm:H}
\begin{enumerate}[i)]
\item\label{eqn:Thm:H} Suppose that $X$ is a unimodal symmetric pure jump
L\'evy process in $\RR$ with the L\'evy exponent
$\psi\in \ls(\al , 0, \un c)\cap \us(\al, \ov\theta, \ov C)$
with $1<\al <2$ and some constants $\ov\theta>0$, $\un c$, and $\ov C$.
Then, we have for all $x\in\RR$
\beq\label{eqn:mainub}
\P^{x}\left(\dimh X^{-1}(F)\leq 1-\frac{1}{\alpha}
+\frac{\dimh F}{\alpha} \mbox{ for all Borel sets } F\subseteq \R\right)=1.
\eeq
\item Suppose that $\psi\in \ls(\al , \un\theta, \un c)\cap \us(\al, \overline{\theta}, \ov C)
$ with $\al \in (1,2]$ and $\un\theta, \overline{\theta}>0$.
Then, we have
\beq\label{eqn:mainlb}
\P^{x}\left(\dimh X^{-1}(F)\geq  1-\frac{1}{\alpha}+\frac{\dimh F}{\alpha}
\mbox{ for all Borel sets } F\subseteq \R\right)=1.
\eeq
\item Suppose that $\psi\in \ls(\al , 0, \un c)\cap \us(\al, \overline{\theta}, \ov C)$ with $\al \in (1,2)$ 
and $\overline{\theta}>0$. Then, we have
\beq\label{eqn:main-equal}
\P^{x}\left(\dimh X^{-1}(F)= 1-\frac{1}{\alpha}+\frac{\dimh F}{\alpha}
\mbox{ for all Borel sets } F\subseteq \R\right)=1.
\eeq
\end{enumerate}
\end{thm}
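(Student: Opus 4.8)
The three parts are tightly linked: iii) is immediate once i) and ii) are in hand, because the hypothesis $\psi\in\ls(\al,0,\un c)\cap\us(\al,\ov\theta,\ov C)$ with $\al\in(1,2)$ is a \emph{global} lower scaling condition (hence implies $\ls(\al,\un\theta,\un c)$ for any $\un\theta>0$) and therefore meets the hypotheses of both i) and ii) at once; intersecting the two full-probability events yields the equality \eqref{eqn:main-equal}. So the whole burden lies in the matching upper and lower bounds, and in both the governing relation is the time--space matching $\de=\rho^{\al}$: in a time interval of length $\de$ the process typically moves a distance $\de^{1/\al}=\rho$, which is exactly what converts a cover of $F$ at spatial scale $\rho$ into a cover of $X^{-1}(F)$ at temporal scale $\de$ carrying the claimed exponent.

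For the upper bound i) I would run a covering argument made uniform in $F$ by two pathwise ingredients, each valid off a single null set. The first is a uniform modulus of continuity for $X$: under $\ls(\al,0,\cdot)\cap\us(\al,\cdot,\cdot)$ one has $\psi(\la)\asymp\la^{\al}$ at infinity, so the generalized inverse satisfies $\psi^{-1}(u)\asymp u^{1/\al}$ and, for small $\de$, $\sup_{|t-s|\le\de,\,s,t\le T}|X(t)-X(s)|\le C\,\de^{1/\al}(\log1/\de)^{1-1/\al}$ almost surely. The second is the boundedness $\sup_{x\in\R}L(x,T)<\infty$ almost surely (the integrability $\int^{\infty}\psi(\la)^{-1}d\la<\infty$, guaranteed by $\al>1$, ensures the local time exists, while joint continuity—hence boundedness—follows from the uniform modulus of continuity of the local times flagged in the abstract). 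Granting these, fix $s'>1-\tfrac1\al+\tfrac{\dimh F}\al$, pick $s''\in(\dimh F,\al s'-\al+1)$, and cover $F$ by intervals $\{I_j\}$ with $\rho_j=|I_j|$ and $\sum_j\rho_j^{s''}<\infty$. Setting $\de_j=\rho_j^{\al}$, the modulus of continuity forces every length-$\de_j$ time subinterval meeting $X^{-1}(I_j)$ to lie inside $X^{-1}(I_j')$, where $I_j'$ is concentric of length $\asymp\rho_j(\log1/\rho_j)^{1-1/\al}$; hence the number $N_j$ of such subintervals satisfies
\beq
N_j\,\de_j\ \le\ \mathrm{Leb}\big(X^{-1}(I_j')\big)=\int_{I_j'}L(x,T)\,dx\ \le\ |I_j'|\sup_{x}L(x,T),
\eeq
so $N_j\lesssim\rho_j^{1-\al}(\log1/\rho_j)^{1-1/\al}$. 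The induced cover of $X^{-1}(F)$ contributes $\sum_jN_j\de_j^{s'}\lesssim\sum_j\rho_j^{s}(\log1/\rho_j)^{1-1/\al}\lesssim\sum_j\rho_j^{s''}<\infty$ with $s=\al s'-\al+1$, giving $\H^{s'}(X^{-1}(F))<\infty$ and $\dimh X^{-1}(F)\le s'$ for all Borel $F$ simultaneously; letting $s'$ decrease yields \eqref{eqn:mainub}.

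For the lower bound ii) I would use the energy (capacity) method, pushing a Frostman measure on $F$ up to $X^{-1}(F)$ through the local times. Given a compact $K\subseteq F$ with $\dimh K>q$, carry a probability measure $\mu$ on $K$ with $\mu(B(y,r))\lesssim r^{s}$ for some $s\in(q,\dimh K)$, and set $\sigma=\int_K L(x,\cdot)\,\mu(dx)$, using the local time measures $L(x,\cdot)$ supported on $\{t:X(t)=x\}$. For $\beta<1-\tfrac1\al+\tfrac q\al$ I would estimate the $\beta$-energy
\beq
\E\,I_\beta(\sigma)=\int_K\!\int_K\E\!\left[\iint_{[0,T]^2}\frac{L(x,dt)\,L(y,dt')}{|t-t'|^{\beta}}\right]\mu(dx)\,\mu(dy),
\eeq
bounding the inner expectation by the two-sided transition density estimates of \cite{BGR}, which under $\ls(\al,\cdot,\cdot)\cap\us(\al,\cdot,\cdot)$ read $p_t(x)\asymp\big(\psi^{-1}(1/t)\big)\wedge\big(t\,\psi(1/|x|)/|x|\big)$ and serve as the weak-scaling surrogate for the stable density. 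Computing $\int_0^T p_t(x)\,dt\lesssim C(T)$ and $\int_0^T s^{-\beta}p_s(y-x)\,ds\asymp|x-y|^{\al-1-\al\beta}$ gives the mutual-energy bound $\lesssim|x-y|^{-(\al\beta+1-\al)}$, whence $\E\,I_\beta(\sigma)\lesssim I_{\al\beta+1-\al}(\mu)<\infty$ precisely when $\al\beta+1-\al<\dimh K$, i.e. $\beta<1-\tfrac1\al+\tfrac{\dimh K}\al$. Thus $I_\beta(\sigma)<\infty$ a.s. and, by Frostman's theorem, $\dimh X^{-1}(F)\ge\dimh(\mathrm{supp}\,\sigma)\ge\beta$; optimizing over $\beta,q$ gives \eqref{eqn:mainlb} for each fixed $F$.

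The main obstacle, and the step I would spend the most care on, is upgrading this fixed-$F$ lower bound to the \emph{uniform} statement, since the Frostman measure $\mu$—and hence the entire energy estimate—depends on $F$, while \eqref{eqn:mainlb} must hold for all $F$ off one null set. The plan is to remove the expectation first: using the uniform modulus of continuity of the local times I would establish a \emph{pathwise} kernel bound, valid almost surely simultaneously over all levels $x,y$, of the form $\iint_{[0,T]^2}|t-t'|^{-\beta}L(x,dt)\,L(y,dt')\le\Phi(|x-y|)$ with a deterministic $\Phi(r)\asymp r^{-(\al\beta+1-\al)}$. Once such a bound is available it applies to every compact $K$ and every Frostman measure on it at no extra cost: on the full-measure event where it holds, $I_\beta(\sigma)\le\iint\Phi(|x-y|)\mu(dx)\mu(dy)=I_{\al\beta+1-\al}(\mu)<\infty$ whenever $\dimh K>\al\beta+1-\al$, so the lower bound is uniform in $F$. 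Reducing the uncontrolled expectation to this deterministic kernel is exactly where the uniform modulus of continuity of the local times does the essential work, and controlling it near the diagonal $x=y$, where the kernel is singular, is the crux of the argument.
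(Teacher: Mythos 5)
Your part iii) is fine (global lower scaling trivially implies $\ls(\al,\un\theta,\un c)$, so intersecting the two almost-sure events from i) and ii) is exactly how the equality is obtained), but your upper-bound argument contains a fatal error. You invoke a uniform modulus of continuity $\sup_{|t-s|\le\de,\,s,t\le T}|X(t)-X(s)|\le C\,\de^{1/\al}(\log 1/\de)^{1-1/\al}$, which is false in the setting of the theorem: $X$ is assumed to be a \emph{pure-jump} L\'evy process ($A=0$), and any such process with $\psi(\la)\asymp\la^{\al}$ at infinity has jumps almost surely, so the left-hand side is bounded below by the largest jump in $[0,T]$ and does not tend to $0$ as $\de\to 0$. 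Consequently your key deduction --- that every time interval of length $\de_j$ meeting $X^{-1}(I_j)$ lies inside $X^{-1}(I_j')$ for a slightly enlarged concentric interval $I_j'$ --- fails: the path can jump far from $I_j$ and return many times, spending an arbitrarily small sojourn time near $I_j$ on each visit, so the occupation-measure count $N_j\,\de_j\le \mathrm{Leb}(X^{-1}(I_j'))$ gives no control on $N_j$. This is precisely why the paper does not argue through continuity at all: it proves a covering principle (Lemma \ref{l:cov_pr}) by defining return times $\tau_k$ separated by $2^{-n\al}$, bounding $\P^x(\tau_k<T)\le p_n^k$ via the strong Markov property, and showing $1-p_n\ge C_T 2^{-n(\al-1)}$ by combining the transition-density lower bound of Theorem \ref{t:bgr} (from \cite{BGR}, where unimodality enters) with the hitting-time tail estimate of Theorem \ref{t:gr} (from \cite{GR}, where the global lower scaling $\un\theta=0$ is used); Borel--Cantelli then makes the covering bound uniform over all dyadic intervals $U$, which is what makes \eqref{eqn:mainub} uniform in $F$. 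Your counting target $N_j\lesssim \rho_j^{1-\al}$ is of the right order, but only the hitting-time route justifies it for jump processes.

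Your lower bound has a different, also genuine, gap: the entire uniformity step rests on an asserted pathwise kernel bound $\iint_{[0,T]^2}|t-t'|^{-\beta}L(x,dt)\,L(y,dt')\le\Phi(|x-y|)$ with deterministic $\Phi(r)\asymp r^{-(\al\beta+1-\al)}$, valid a.s.\ simultaneously in all $x,y$, which you identify as the crux but never prove; controlling this kernel near the diagonal is exactly the hard part, and it is doubtful one gets a deterministic constant without logarithmic corrections. The paper sidesteps the energy method entirely: it proves a uniform H\"older condition for the local times (Theorem \ref{thm:HolderLC}, via the moment estimates of \cite[Lemma 4.2]{KXZ} and a chaining argument over lattice points), plus a covering lemma for the images $X(I)$ (Lemma \ref{lemma:covering2}), and then, following \cite{SXY}, applies the \emph{mass distribution principle} to $\lam=\int L(x,\cdot)\,\mu(dx)$ with $\mu$ a Frostman measure on $F$, obtaining pathwise $\lam(B)\le \mathrm{diam}(B)^{1-\frac1\al+\gamma\dimh F-2\eps}$ on one null-set complement valid for every $F$ at once. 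This ball-measure route delivers exactly the uniformity your energy approach postpones, and it is the missing ingredient your sketch would need (indeed, if you did have the paper's Theorem \ref{thm:HolderLC}, the natural conclusion is the mass-distribution bound directly, making the energy integral unnecessary). As it stands, your proposal establishes neither bound: the upper bound by a false lemma, the lower bound by an unproven one.
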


\begin{thm}\label{Thm:P}
\begin{enumerate}[i)]
\item Suppose that the condition of the first part of Theorem \ref{Thm:H} holds.
Then, we have for all $x\in\RR$
\beq\label{eqn:mainubp}
\P^{x}\left(\dimp X^{-1}(F)\leq 1-\frac{1}{\alpha}+\frac{\dimp F}
{\alpha} \mbox{ for all Borel sets } F\subseteq \R\right)=1.
\eeq
\item
Suppose that the condition of Part  ii) of Theorem \ref{Thm:H} holds.
Then, we have for all $x\in\R$
\beq\label{eqn:mainlbp}
\P^{x}\left(\dimp X^{-1}(F)= 1-\frac{1}{\alpha}+\frac{\dimp F}{\alpha}
\mbox{ for all Borel sets } F\subseteq \R
\text{ with } \dimp F=\dimh F
\right)=1.
\eeq
\end{enumerate}
\end{thm}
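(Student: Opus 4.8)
The plan is to split the equality in Part ii) into its two inequalities, obtain the lower bound for free from the Hausdorff theorem, and concentrate all of the real work on an upper bound that serves both parts. For the lower bound in Part ii), I would combine the elementary inequality $\dimh\le\dimp$ with Theorem \ref{Thm:H} ii): under its hypotheses, almost surely and simultaneously for all Borel $F\subseteq\R$,
\[
\dimp X^{-1}(F)\ \ge\ \dimh X^{-1}(F)\ \ge\ 1-\tfrac1\alpha+\tfrac{\dimh F}{\alpha}\ =\ 1-\tfrac1\alpha+\tfrac{\dimp F}{\alpha},
\]
the last equality being precisely where the hypothesis $\dimp F=\dimh F$ is used. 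Because Theorem \ref{Thm:H} ii) is itself a uniform statement, this holds for all admissible $F$ on a single almost-sure event.

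Both the bound \eqref{eqn:mainubp} of Part i) and the upper half of Part ii) I would reduce to one uniform upper box-counting estimate. Using the decomposition characterization $\dimp E=\inf\{\sup_n\dimb E_n:E\subseteq\bigcup_n E_n\}$ together with the countable stability $\dimp(\bigcup_n E_n)=\sup_n\dimp E_n$, it suffices to show that, almost surely and simultaneously for every bounded Borel $G\subseteq\R$,
\[
\dimb\bigl(X^{-1}(G)\cap[0,T]\bigr)\ \le\ 1-\tfrac1\alpha+\tfrac{\dimb G}{\alpha}.
\]
Indeed, given any $F$ and $\varepsilon>0$, choosing a decomposition $F=\bigcup_n F_n$ with $\sup_n\dimb F_n\le\dimp F+\varepsilon$ yields $\dimp X^{-1}(F)=\sup_n\dimp X^{-1}(F_n)\le\sup_n\dimb X^{-1}(F_n)\le 1-\tfrac1\alpha+\tfrac{\dimp F+\varepsilon}{\alpha}$, and letting $\varepsilon\to0$ gives the desired bound; unbounded $F$ are handled by exhausting $\R$ with the intervals $[-M,M]$, $M\in\N$. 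Since the box estimate is a purely small-scale ($r\to0$) assertion, only the weak scaling of $\psi$ at infinity is needed, so it is available under the hypotheses of either part.

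The box estimate in turn follows from the covering bound $N_r\bigl(X^{-1}(I)\cap[0,T]\bigr)\lesssim r^{\,1/\alpha-1}$, valid uniformly in small $r$ and in all space-intervals $I$ of length $\delta$, where $\delta$ is the spatial scale attached to the time scale $r$ by $\psi(1/\delta)\asymp 1/r$, so that $\delta\asymp r^{1/\alpha}$ by the two-sided weak scaling; here $N_r$ counts $r$-mesh intervals meeting the set. Covering $G$ by $N_\delta(G)$ such space-intervals and summing gives $N_r(X^{-1}(G))\lesssim r^{1/\alpha-1}N_\delta(G)$, and inserting $\log N_\delta(G)\lesssim(\dimb G/\alpha)\log(1/r)$ produces the displayed box-dimension bound. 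The total occupation of a single $\delta$-interval is controlled through the occupation density, $\int_I L(y,T)\,dy\lesssim\delta$, for which the uniform modulus of continuity of the local times proved in Section 4 is exactly the input that makes the estimate uniform in $I$.

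The hard part will be upgrading this to an almost-sure statement that holds for all intervals $I$ and all scales $r$ at once, while accommodating the jumps of $X$. Unlike the Brownian case there is no deterministic path modulus, so I cannot assert $X(J)\subseteq I^{(c\delta)}$ (the $c\delta$-enlargement of $I$) for an $r$-interval $J$ meeting $X^{-1}(I)$: a jump can carry the path through $I$ in negligible time, and the naive ``each meeting interval spends time $r$ near $I$'' count fails. The substitute is to show that each $r$-interval meeting $X^{-1}(I)$ nonetheless forces a definite amount of local time in the fixed enlargement $I^{(c\delta)}$, uniformly in $I$ and $r$; this sojourn-type lower bound, together with the upper bound $L(I^{(c\delta)},T)\lesssim\delta$ on the available local time, caps the number of meeting intervals at $\asymp r^{1/\alpha-1}$. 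Securing the uniformity is then a matter of running the estimate over a countable dyadic net of intervals $I$ and scales $r=2^{-k}$ and invoking Borel--Cantelli, and this is the step where the weak scaling hypotheses and the Section 4 modulus are used in full force.
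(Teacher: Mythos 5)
Your outer scaffolding coincides with the paper's own proof: the lower bound in ii) via $\dimp\ge\dimh$ plus Theorem \ref{Thm:H} ii) is exactly Remark \ref{r:1.6}(2), and your reduction of the packing upper bound to a uniform upper box-dimension estimate — first $\dimb\big(X^{-1}(G)\cap[0,T]\big)\le 1-\frac1\al+\frac{\dimb G}{\al}$ simultaneously for all $G$, then countable stability \eqref{eqn:pstable}, the comparison \eqref{e:dim_comparison}, and the regularization formula \eqref{eqn:regularization} — is the paper's proof of Part i) essentially verbatim. Where you diverge is in how the key covering estimate is obtained, and that is where your proposal has a genuine gap rather than a proof. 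The paper's Lemma \ref{l:cov_pr} establishes that a.s., for all large $n$ and \emph{every} $U\in\mathcal{U}_n$, the set $X^{-1}(U)\cap[0,T)$ is covered by $2\cdot 2^{n\de}$ intervals of length $2^{-n\al}$ (any $\de>\al-1$), by iterating return times $\tau_k$ separated by gaps $2^{-n\al}$ and proving $\P^x(\tau_k<T)\le p_n^k$ with $1-p_n\ge C_T\,2^{-n(\al-1)}$; the latter rests on the transition density lower bound (Theorem \ref{t:bgr}) and the interval hitting-time tail of Grzywny--Ryznar (Theorem \ref{t:gr}) applied to the rescaled process $Y_n$ over the horizon $T2^{n\al}$. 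No local times enter the upper bound at all; Section 4's modulus of continuity is used only for the \emph{lower} bound of Theorem \ref{Thm:H} ii).

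Your substitute mechanism — that each $r$-window meeting $X^{-1}(I)$ ``forces a definite amount of local time'' in the enlargement $I^{(c\delta)}$ — is false as a pathwise statement, as you yourself note: a jump can exit immediately after a hit, and for a jump process there is no deterministic per-visit sojourn bound. What you defer to the ``hard part'' is in fact the entire content of the theorem. To repair it you would need (a) a per-visit success probability, uniform in $n$ and in $I$, that a visit accrues sojourn of order $2^{-n\al}$ in $I^{(c\delta)}$ — which requires two-sided heat kernel estimates for the rescaled processes plus a second-moment (Paley--Zygmund) argument, i.e.\ exactly the unimodality-and-scaling machinery of Theorems \ref{t:bgr}--\ref{t:gr} that your sojourn route was meant to bypass — and (b) a strong-Markov concentration argument converting this into a visit count of order $2^{n(\al-1)}$, handling the dependence between consecutive windows; neither step is carried out, and your claimed sharp count $N_r\lesssim r^{1/\al-1}$ is stronger than what the paper proves (it settles for $2^{n\de}$, $\de>\al-1$, which suffices). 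Finally, your assertion that the box estimate is ``purely small-scale'' so that scaling of $\psi$ at infinity suffices under the hypotheses of either part is incorrect for the argument that actually works: the hitting-time estimate for $Y_n$ runs over the growing horizon $T2^{n\al}$, so the rescaled exponents $\psi_n(\la)=2^{-n\al}\psi(2^n\la)$ must satisfy lower scaling at \emph{all} scales — this is precisely why Part i) of Theorems \ref{Thm:H} and \ref{Thm:P} assumes the global condition $\ls(\al,0,\un c)$, which the hypotheses of Part ii) alone do not provide.
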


\begin{remark}\label{r:1.6}
\begin{enumerate}
\item 
The condition that $\dimp F=\dimh F$ in Theorem \ref{Thm:P} is a regularity condition 
on the set $F$ and is technical in nature. 
Even though such condition is satisfied by many
fractal sets, it is natural to ask whether one may remove it. In the case of Brownian motion, 
an affirmative answer can be proved by applying its uniform modulus of continuity 
and the asymptotic property of its local times. 
%However, in the more case, we have
However, in the general L\'evy processes, we have
not been able to do so due to a difficulty caused by the jumps of $X$.

\item Item ii) of Theorem \ref{Thm:P} follows from item ii) of Theorem
\ref{Thm:H}. Indeed, $\dimp X^{-1}(F)\ge \dimh X^{-1}(F) \ge 1-\frac{1}
{\al}+\frac{\dimh F}{\al}= 1-\frac{1}{\al}+\frac{\dimp F}{\al}$ under
the regularity condition on $F$.

\item As explained in \cite{SXY} when $0<\alpha<1$ the uniform Hausdorff
dimension estimate for the inverse images for symmetric stable processes
can not be true. Therefore one can not expect  Theorem \ref{Thm:H} to hold
when $0<\alpha<1$. The case for $\alpha=1$ is still open even for the
Cauchy process.
%new
\item It is an interesting question to study the dimensions of $X^{-1}(F)$ when 
the upper and lower scaling indices of $\psi$ are different (i.e., $\underline{\alpha} <\overline{\alpha}$). 
In this case, it is possible to extend Lemma \ref{l:cov_pr} and Theorem \ref{thm:HolderLC} 
so that one can derive upper and lower bounds for the Hausdorff and packing dimensions of $X^{-1}(F)$ in 
terms of $\underline{\alpha}$, $\overline{\alpha}$, and the dimensions of $F$.  However, since $F$ 
may vary arbitrarily, there is no hope to obtain equalities in general. Hence we have chosen to state 
Theorems \ref{Thm:H} and  \ref{Thm:P} to give explicit formulae for $\dimh X^{-1}(F)$ and 
$\dimp X^{-1}(F)$.

%stop to work and one 
%would need different techniques, which is beyond the scope of this paper. 
\item Non-uniform dimension results on the inverse images of L\'evy-type 
processes have been obtained in \cite{KS15},  it would be interesting to obtain uniform 
dimension results for these processes. 
%end new
\end{enumerate}
\end{remark}

The general strategy for proving Theorems \ref{Thm:H} and \ref{Thm:P} is
similar to that of \cite{SXY}. To show the uniform (in set $F$) upper bounds
in \eqref{eqn:mainub} and \eqref{eqn:mainubp}, we will prove a covering principle
for the inverse images $X^{-1}(F)$ by applying the recent contribution
of \cite{GR} on the hitting times of a class of L\'evy processes.
On the other hand, for proving the lower bound in \eqref{eqn:mainlb}, we
investigate regularity properties of the local time $L(x, \cdot)$ of $X$, which
can be extended to a Borel measure supported by the level sets of $X$. This
allows us to construct a family of random measures carried by the inverse image
$X^{-1}(F)$ and to establish the desired uniform lower bound by using Frostman's lemma.

This paper is organized as follows.  We present preliminary material
in Section 2.  Theorem \ref{Thm:H} and Theorem \ref{Thm:P} are
proved in Section 3 and 4, respectively. 
Some examples are given in Section 5.  Throughout the paper,  for $f,g:
\RR\to\RR$,  we write $f\asymp g$ if the ratio of the two functions is
bounded from above and from below by some positive finite constants.
Universal constants are denoted by $c,C$ which may differ from line
to line.  
Specific constants are denoted by $c_1,c_2, K_1, K_2$, etc.
Denote by $\E^x$ the expectation with respect to $\P^x$ and for
simplicity, write $\P=\P^0$ and $\E=\E^0$.

\section{Preliminaries}

\subsection{Fractal dimensions}

For definition of Hausdorff dimension, we refer to the monograph of Falconer \cite{Fal}. In the following,
we recall from \cite{Fal} the definition of packing measure and dimension.

Let $s > 0$ be a constant. For any $F \subseteq \R^d$, the \textit{s-dimensional packing measure} of
$F$ is defined by
$$
\mathcal{P}^{s}(F)=\inf\left\lbrace\sum_{i=1}^{\infty}\mathcal{P}_{0}^{s}(F_{i}):\, F\subset
\bigcup_{i=1}^{\infty}F_{i}\right\rbrace,
$$
where $\mathcal{P}_{0}^{s}(F)=\downarrow\lim_{\delta\rightarrow 0}\mathcal{P}_{\delta}^{s}(F)$ and
$$
\mathcal{P}_{\delta}^{s}(F)=\sup\left\lbrace \sum_{i=1}^{\infty}(2 r_{i})^{s} \right\rbrace,
$$
where the supremum is taken over all collections $\{B_i\}$ of disjoint balls of radii $r_i$ at most $\de$
with centers in $F$. The packing dimension of $F$ is defined as
$$
\dimp  F=\sup\{s\geq 0 : {\mathcal P}^{s}(F)=\infty \}=\inf \{s\geq 0 : \mathcal{P}^{s}(F) =0\}.
$$
It can be verified that the packing dimension is stable under countable union in the sense that
\beq\label{eqn:pstable}
\dimp \bigg(\bigcup_{i=1}^{\infty}F_{i}\bigg)=\sup_{i}\dimp F_{i}.
\eeq

For any bounded set $F \subset \R^d$, let $N_{\delta}(F)$ be the smallest number of sets of
diameter at most $\delta$ that covers $F$. 
Then, the upper box dimension of $F$ is defined by
$$
\dimb F =\limsup_{\delta\to 0}\frac{\ln N_{\delta}(F)}{-\ln \delta}.
$$

Note that for any set $F\subset \R^{d}$ (see \cite[Equation (3.27)]{Fal})
\begin{equation}\label{e:dim_comparison}
\dimh F\leq \dimp F\leq \dimb F.
\end{equation}

Moreover,  packing and upper box dimensions are related by the following
regularization procedure (cf. \cite{Fal}):
\beq\label{eqn:regularization}
\dimp F=\inf \bigg\{\sup_{n} \dimb F_{n} :\, F\subset \bigcup_{i=1}^{\infty}F_{n} \bigg\},
\eeq
where the infimum is taken over all $\{F_{n}\}$ such that $F \subset \bigcup_{i=1}^{\infty}F_{n}$.

\subsection{Weak scaling condition}

Now we focus on the weak scaling conditions.  We start with some notation. 
Let $\phi:[0,\infty)\rightarrow [0,\infty)$
be a continuous and nondecreasing function. We define its
generalized inverse as
$$
\phi^{-1}(u)=\inf\{s\geq 0 : \phi(s)\geq u\}, \quad u \in [0,\phi(\infty)].
$$
Note that $\phi^{-1}$ is nondecreasing and it is left continuous and has a right limit at every point. 
Also it is easy to see
$\phi^{-1}(\phi(u))\leq u \hbox{ and } \phi(\phi^{-1}(u))=u. $

If $\phi$ is not necessarily nondecreasing, we define the maximal function
 $\phi^{*}$ as
$$
\phi^{*}(y)=\sup \{\phi(x) : 0\leq x\leq y\}.
$$
Motivated by the relation
$
\inf\{s : \phi(s)\geq u\}=\inf\{s : \phi^{*}(s)\geq u\},
$
we define
$
\phi^{-1}:=(\phi^{*})^{-1}.
$
Note that we still have $\phi^{-1}(\phi(u))\leq u$ and $\phi(\phi^{-1})(u)=u$.

Let us review some consequences of the weak scaling conditions.  Each point is used in
a specific stage of the proof.

\begin{itemize}
\item The global weak lower scaling condition implies an asymptotic 
result for the hitting times of $X$ on a compact interval, see Theorem 
\ref{t:gr} below.
\item  We need in the proof that a re-scaled version of $X$, denoted 
by $Y_b= \{b X(b^{-\al}t), t\ge 0\}$ for the moment, satisfies 
$\P^0(Y_b(1)\in [2,3])>c$ uniformly for all large $b$.  This property 
is guaranteed whenever the characteristic exponent of $X$ satisfies 
$\psi \in\ls(\al, \theta, \un c) \cap \us(\al, \theta, \ov C)$ 
with $0< \al<2$ and that $X$ is unimodal, as suggested 
by Theorem \ref{t:bgr} below.
\end{itemize}

Denote by $T_{A}=\inf\{t>0 : X(t)\in A\}$ the first hitting time of 
$A \subset \R$ by $X$. 
The following theorem is taken from \cite{GR} which provides a sharp tail 
probability estimate for the first hitting time of an interval.
\begin{thm}(\cite[Th.~5.5]{GR})\label{t:gr}
Suppose that $\psi\in \ls(\underline{\alpha},0,\underline{c})$ with 
$\underline{\alpha}>1$. 
Then for any $x \in \R$ with  $|x|>R$,
$$
\P^{x}(T_{[-R,R]} > t)\asymp\frac{V(|x|-R)K(|x|)}{V(|x|)t
\psi^{-1}(1/t)}\wedge 1, \quad t>1/\psi^{*}(1/R),
$$
where the comparability constant depends only on the scaling 
characteristics,
\begin{align*}
K(x)=\frac 1 \pi \int_0^\infty (1-\cos xs)\frac 1 {\psi(s)}ds,
\end{align*}
and $V(x)$, $x\ge 0$, is the potential measure of the interval 
$[0,x]$ which satisfies
\begin{align*}
V(r)\asymp \frac{1}{\sqrt{\psi^*(1/r)}}.
\end{align*}
\end{thm}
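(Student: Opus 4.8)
The plan is to pass through the Laplace transform in time and then invert. By symmetry we may take $x>R$; write $D=[-R,R]$ and, for $q>0$, let $\mathbf e_q$ be an independent exponential time of rate $q$. The central object is the averaged survival probability
\[
\P^x(T_D>\mathbf e_q)=1-\E^x\big[e^{-qT_D}\big]=q\int_0^\infty e^{-qt}\,\P^x(T_D>t)\,dt,
\]
so that sharp two-sided control of the left-hand side as $q\to0$ will, after inversion, yield the tail as $t\to\infty$. I would analyze the left-hand side through the potential theory of $X$ killed on entering $D$, reducing it to the first passage of $X$ below the level $R$ from the starting point $x$. For a symmetric L\'evy process the ascending and descending ladder-height structures coincide, and the position of this first passage (the overshoot across $R$, and whether it lands in $[-R,R]$ or jumps past $-R$) is governed by the renewal function $V$ of the ladder-height process; the hypothesis $\un\alpha>1$ in $\ls(\un\alpha,0,\un c)$ is exactly the Kesten--Bretagnolle condition that $X$ hits points, which makes the relevant potential kernels finite.

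Next I would isolate the two spatial factors. The factor $K(|x|)=\frac1\pi\int_0^\infty(1-\cos|x|s)\psi(s)^{-1}\,ds$ is the (compensated) potential kernel $a(\cdot)$ of $X$; it appears in the $q\to0$ limit because the recurrence of the one-dimensional motion forces the relevant $q$-harmonic function to be controlled by this kernel, and it increases with $|x|$, reflecting that a more distant start survives longer. The ratio $V(|x|-R)/V(|x|)$ is a harmonic-measure/survival factor: comparing the renewal function at the distance $|x|-R$ to the boundary of $D$ against its value at the distance $|x|$ to the centre measures the chance that a long excursion away from $D$ avoids $D$, and it correctly degenerates to $0$ as $x\downarrow R$ and to $1$ as $|x|\to\infty$. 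The weak lower scaling, via a Tauberian theorem for the ladder process, yields the asymptotic $V(r)\asymp(\psi^*(1/r))^{-1/2}$ with constants depending only on the scaling characteristics, which makes both factors explicit.

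The temporal factor comes out by setting $q=1/t$: then $q/\psi^{-1}(q)=1/(t\psi^{-1}(1/t))$, and $1/\psi^{-1}(q)$ is the spatial spread of the killed process over the time scale $1/q$. Combining the spatial estimates with sharp bounds on the $q$-resolvent density (again from the weak scaling of $\psi$), I would establish, for $q<\psi^*(1/R)$,
\[
\P^x(T_D>\mathbf e_q)\asymp\frac{V(|x|-R)K(|x|)}{V(|x|)}\cdot\frac{q}{\psi^{-1}(q)}\wedge1 .
\]
Since $t\mapsto\P^x(T_D>t)$ is nonincreasing and the weak lower scaling endows $\psi^{-1}$ with a matching one-sided scaling, a Tauberian theorem for monotone functions (in the form valid under O-regular variation / weak scaling) transfers this comparability in the Laplace variable into the claimed comparability of the tail, with $q=1/t$ and the range $q<\psi^*(1/R)$ becoming $t>1/\psi^*(1/R)$.

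The main obstacle is the matching \emph{lower} bound for the spatial survival factor $V(|x|-R)/V(|x|)$. An upper bound follows fairly directly from first-passage and overshoot estimates, but the lower bound requires showing that with probability comparable to this ratio the process performs a genuinely long excursion outside $D$ before entering it; this needs a boundary-Harnack-type control of the killed process near $\pm R$ together with a quantitative use of the overshoot law (via the Ikeda--Watanabe formula) to guarantee entrance into $[-R,R]$ rather than a jump past it. A secondary, pervasive difficulty is to keep every constant dependent only on the scaling characteristics, so that the final $\asymp$ is uniform in $x$, $R$ and $t$; here the \emph{global} lower scaling ($\un\theta=0$) is essential, since it supplies renewal and resolvent estimates valid down to arbitrarily small scales.
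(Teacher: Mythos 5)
You should first note what you are being compared against: the paper does not prove this statement at all --- it is quoted verbatim, with citation, as \cite[Th.~5.5]{GR}, so the only meaningful benchmark is Grzywny--Ryznar's original argument. Your sketch does move in the same potential-theoretic territory as theirs (the ladder-height renewal function $V$ with $V(r)\asymp \psi^*(1/r)^{-1/2}$, the compensated kernel $K$, $q$-resolvent estimates under weak scaling), so the skeleton is the right one. But as a proof it has a genuine gap beyond the one you flag, namely the inversion step. From $\P^x(T_D>\mathbf e_q)\asymp \phi(q)\wedge 1$ one direction is indeed elementary: $\P^x(T_D>\mathbf e_{1/t})\ge e^{-1}\,\P^x(T_D>t)$ gives the \emph{upper} bound for the tail. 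The \emph{lower} bound, however, does not follow from monotonicity plus O-regular variation of $\psi^{-1}$, and ``a Tauberian theorem for monotone functions'' is not available here: the elementary estimate $\P^x(T_D>t)\ \ge\ \P^x(T_D>\mathbf e_q)-qt$ forces $q\le \eps/t$, and then the subtracted term $qt\approx\eps$ is a fixed constant while the target quantity $\frac{V(|x|-R)K(|x|)}{V(|x|)\,t\,\psi^{-1}(1/t)}$ can be arbitrarily small (for instance as $|x|\downarrow R$), so the inequality is vacuous in exactly the delicate regime. Two nonincreasing functions with comparable Laplace transforms need not have comparable tails; this is why the lower bound in \cite{GR} is obtained directly in the time domain, by a path construction --- run to distance of order $1/\psi^{-1}(1/t)$ without touching $[-R,R]$, then survive time $t$ from there, glued by the strong Markov property and exit-time/resolvent estimates.

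This direct construction is precisely what you label ``the main obstacle'' and then leave unresolved, but it is not a side issue: it \emph{is} the proof of the lower bound, and without it neither the factor $V(|x|-R)/V(|x|)$ nor the uniformity of the comparability constant in $x$, $R$, $t$ is established. Two smaller inaccuracies: the hypothesis $\un{\alpha}>1$ in $\ls(\un{\alpha},0,\un{c})$ is not ``exactly'' the Kesten--Bretagnolle condition --- it implies the integrability criterion making points non-polar (and makes $K$ finite), but is strictly stronger; and the identification of $K$ with the compensated potential kernel, while correct for these recurrent symmetric processes, is asserted rather than used quantitatively, whereas the actual estimates require two-sided bounds on $K$ and on the $q$-resolvent density coming from the scaling hypotheses.
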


Recall the following result on the lower bound for the transition 
density of unimodal symmetric L\'evy processes with the weak scaling 
properties.
\begin{thm}(\cite[Th.~21]{BGR})\label{t:bgr}
Let $X$ be a unimodal symmetric L\'evy process in $\R$ with 
characteristic exponent $\psi$. If $\psi\in \ls(\underline{\alpha},
\theta,\underline{c})\cap \us (\overline{\alpha}, \theta,
\overline{C})$ with $0< \un\al, \ov\al<2$, then there exist constants 
$c^{*}$ and $r_{0}$ such that the  transition density $p(t,x)$ satisfies
$$
p(t,x)\geq c^{*}\left(\psi^{-1}(1/t) \wedge \frac{t\psi^{*}(1/|x|)}
{|x|}\right) \ \ \text{ if } t>0,
\  t\psi^{*}(\theta/r_{0})<1, \text{ and } \  |x|<r_{0}/\theta.
$$
\end{thm}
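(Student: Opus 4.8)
The plan is to split the analysis into the two spatial regimes singled out by the minimum and to combine Fourier inversion, the radial monotonicity of $p(t,\cdot)$ coming from unimodality, and the two-sided weak scaling of $\psi$. The constraints $t\psi^*(\theta/r_0)<1$ and $|x|<r_0/\theta$ serve to confine everything to frequencies above $\theta$ and spatial scales below $1/\theta$, the range on which $\ls(\un\al,\theta,\un c)$ and $\us(\ov\al,\theta,\ov C)$ are in force; there $\psi$ and $\psi^*$ are comparable and behave essentially like a power with exponent between $\un\al$ and $\ov\al$, both lying in $(0,2)$. Writing $\rho:=\psi^{-1}(1/t)$, the transition between the two regimes occurs at $|x|\asymp 1/\rho$: a short computation using the scaling shows that $\psi^{-1}(1/t)$ is the smaller term when $|x|\lesssim 1/\rho$, that $t\psi^*(1/|x|)/|x|$ dominates when $|x|\gtrsim 1/\rho$, and that at $|x|\asymp 1/\rho$ both are comparable to $\rho$.

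First I would treat the near-diagonal regime $|x|\lesssim 1/\rho$. By symmetry $p(t,0)=\frac1{2\pi}\int_\R e^{-t\psi(\lambda)}\,d\lambda$, and restricting the integral to $[-\rho,\rho]$, where $t\psi(\lambda)\le 1$, yields at once $p(t,0)\ge c\,\rho=c\,\psi^{-1}(1/t)$; here $\un\al>0$ guarantees that $e^{-t\psi}$ is integrable, so the density exists. To spread this lower bound over a ball I would estimate the oscillation $p(t,0)-p(t,x)=\frac1{2\pi}\int_\R(1-\cos\lambda x)\,e^{-t\psi(\lambda)}\,d\lambda$, bounding $1-\cos\lambda x\lesssim(\lambda x)^2$ on $[-\rho,\rho]$ and controlling the high-frequency tail by the lower scaling; this shows $p(t,0)-p(t,x)\le\frac12 p(t,0)$ whenever $|x|\le\eps/\rho$ for a small fixed $\eps$. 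Hence $p(t,x)\gtrsim\psi^{-1}(1/t)$ on $\{|x|\le\eps/\rho\}$, which is exactly where $\psi^{-1}(1/t)$ is the minimum.

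Next I would handle the off-diagonal regime $|x|\ge\eps/\rho$, where the minimum equals $t\psi^*(1/|x|)/|x|$. The starting ingredient is the comparability of the Lévy density with $\psi$ for unimodal processes under weak scaling, namely $\nu(x)\asymp\psi(1/|x|)/|x|$ on the admissible range, so that $t\,\nu(x)\asymp t\psi^*(1/|x|)/|x|$ after invoking $\psi^*\asymp\psi$. It then suffices to prove the single-big-jump bound $p(t,x)\gtrsim t\,\nu(x)$. I would obtain this from the compensation (Ikeda--Watanabe) formula by restricting to trajectories that stay in $B(0,r)$ with $r\asymp1/\rho$ until some time $s\in(t/2,t)$, make one jump landing in $B(x,r)$, and afterwards remain in $B(x,r)$. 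The jump factor contributes $\asymp t\,\nu(x)$, while the two survival/occupation factors are bounded below by positive constants using the near-diagonal estimate of the previous step together with lower-scaling bounds on the probability that $X$ does not exit a ball of radius $\asymp1/\rho$ before time $t$.

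Gluing the two regimes at $|x|\asymp1/\rho$, where both lower bounds are comparable to $\psi^{-1}(1/t)$, reproduces the claimed minimum. I expect the off-diagonal step to be the main obstacle: converting the single-big-jump heuristic into a rigorous inequality demands uniform lower bounds, over the whole admissible range of $(t,x)$, on the probability that $X$ does not exit $B(0,r)$ before time $t$ and on the mass kept near $x$ after the jump, and it is precisely here that unimodality and the two-sided scaling with $\un\al,\ov\al\in(0,2)$ are used in an essential way, with the thresholds $t\psi^*(\theta/r_0)<1$ and $|x|<r_0/\theta$ keeping $\psi$ in its power-like range. Securing the preliminary Lévy-density comparability $\nu(x)\asymp\psi(1/|x|)/|x|$ from mere weak scaling is itself a nontrivial input I would need to establish first.
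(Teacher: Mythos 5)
This theorem carries no proof in the paper: it is imported verbatim from \cite[Th.~21]{BGR}, so your attempt can only be measured against the source. There your outline matches the actual argument in all essentials: the near-diagonal bound $p(t,x)\gtrsim\psi^{-1}(1/t)$ for $|x|\le\eps/\psi^{-1}(1/t)$ via Fourier inversion plus an oscillation estimate (with the tail of $\int e^{-t\psi}$ controlled by the lower scaling above the threshold, which is exactly what the hypotheses $t\psi^{*}(\theta/r_{0})<1$ and $|x|<r_{0}/\theta$ guarantee, as you correctly observe); the off-diagonal bound via a single-big-jump (Ikeda--Watanabe) construction giving $p(t,x)\gtrsim t\,\nu(x)$, with the survival factors handled by $\P\big(\sup_{s\le t}|X_s|>r\big)\le c\,t\,\psi^{*}(1/r)$ together with lower scaling; and the comparability $\nu(x)\asymp\psi(1/|x|)/|x|$, which you rightly flag as a nontrivial separate input --- in \cite{BGR} the upper bound is generic for unimodal processes while the lower bound requires the upper scaling condition, and it is proved there as an independent result before the heat kernel bound. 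The one soft spot in your plan is the intermediate annulus: your oscillation argument covers only $|x|\le\eps/\psi^{-1}(1/t)$ with $\eps$ small, whereas the jump construction with confinement radius $r\asymp M/\psi^{-1}(1/t)$ requires the starting ball and the target ball $B(x,r)$ to be disjoint, hence $|x|$ larger than a fixed large multiple of $1/\psi^{-1}(1/t)$; the range in between is not reached by either argument as written. This is fixable with a tool you already list: by unimodality $p(t,\cdot)$ is radially nonincreasing, so for $\eps/\psi^{-1}(1/t)\le|x|\le M/\psi^{-1}(1/t)$ one has $p(t,x)\ge p(t,y)$ with $|y|=M/\psi^{-1}(1/t)$, and the jump bound at $|y|$ yields $p(t,y)\ge c_{M}\,\psi^{-1}(1/t)$, closing the gap at the cost of an $M$-dependent constant. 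With that patch supplied, your proposal is a faithful reconstruction of the proof of \cite[Th.~21]{BGR}, not a genuinely different route.
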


\section{Proof for the upper bounds}

Let us start with the upper bound.
We establish a covering principle for the inverse images of $X$, 
which is a reminiscence of \cite[Lemma 2.2]{SXY}. Let $\mathcal{U}_{n}$ 
be any partition of $\R$ with intervals of length $2^{-n}$ and 
$\mathcal{D}_{n} (\ov{\alpha})$ be any partition of 
$[0,\infty)$ with length $2^{-n\ov{\alpha}}$. 

\begin{lemma}\label{l:cov_pr}
Suppose that $\psi\in \ls(\alpha,0,\underline{c})
\cap\us(\alpha,\theta,\ov C)$ with $1
< \alpha <2$ 
and for some constant $\theta>0$.
Let $\de>\al-1$ and $T>0$. 
$\P^x$-a.s. for all integer $n$ 
sufficiently large and each $U\in\mathcal U_n$, $X^{-1}(U)
\cap [0,T)$ can be covered by $2\cdot 2^{n\de}$ intervals 
from $\mathcal D_n(\al)$.
\end{lemma}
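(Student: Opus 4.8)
The plan is to recast the covering bound as a probability tail estimate and feed it into a Borel--Cantelli argument. For $U\in\mathcal U_n$ let $L_n(U)$ denote the number of intervals $D\in\mathcal D_n(\al)$ with $D\subseteq[0,T)$ and $X^{-1}(U)\cap D\neq\emptyset$; this is exactly the number of intervals of $\mathcal D_n(\al)$ needed to cover $X^{-1}(U)\cap[0,T)$. Since $X$ has c\`adl\`ag paths, $X([0,T))$ is $\P^x$-a.s.\ bounded, so I may fix $M\in\N$, establish the estimate for every $U$ meeting $[-M,M]$ (of which there are $O_M(2^n)$), and finally let $M\to\infty$. Thus it suffices to prove
\[
\sum_n 2^n\,\sup_{U\cap[-M,M]\neq\emptyset}\P^x\bigl(L_n(U)>2\cdot 2^{n\de}\bigr)<\infty .
\]
The matched scaling of $X$ makes the time scale $2^{-n\al}$ correspond to the space scale $2^{-n}$ (indeed $\psi^{-1}(2^{n\al})\asymp 2^{n}$), and I expect $\E^x L_n(U)\asymp 2^{n(\al-1)}$. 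Because $\de$ is only assumed to exceed $\al-1$, the first moment together with Markov's inequality is \emph{not} enough to beat the factor $2^n$ from the union bound; the crux is therefore a bound on high moments (equivalently an exponential tail) of $L_n(U)$, where the hypothesis $\de>\al-1$ provides exactly the needed slack.

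To control $L_n(U)$ I enlarge $U$ to the concentric interval $\wh U$ of length $3\cdot2^{-n}$ and split the hit intervals according to whether they are visible on the grid $t_k=k2^{-n\al}$. A short path argument shows that each hit interval $D_k$ either satisfies $X(t_k)\in\wh U$, or is the single interval containing the start of a fresh excursion that leaves $\wh U$ and later reaches $U$; moreover at most one hit interval is charged to each such excursion. Consequently
\[
L_n(U)\le A_n(U)+J,\qquad A_n(U):=\#\{k:\ t_k<T,\ X(t_k)\in\wh U\},
\]
where $J$ is the number of excursions of $X$ from $\wh U$ that reach $U$ before time $T$. I will bound the moments of $A_n(U)$ and the upper tail of $J$ separately, each uniformly in $U$ and in large $n$.

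For $A_n(U)$ I use the Markov property in the classical manner of Hawkes: for $k_1\le\cdots\le k_p$ (with $t_{k_0}:=0$),
\[
\P^x\bigl(X(t_{k_1})\in\wh U,\ldots,X(t_{k_p})\in\wh U\bigr)\le\prod_{i=1}^{p}\sup_{z}\P^z\bigl(X(t_{k_i}-t_{k_{i-1}})\in\wh U\bigr),
\]
and unimodality gives $\sup_z\P^z(X(s)\in\wh U)=\int_{\wh U}p(s,u)\,du\le 3\cdot2^{-n}p(s,0)$ together with the transition-density bound $p(s,0)\asymp\psi^{-1}(1/s)$. Summing $2^{-n}\psi^{-1}(1/t_k)$ over $k$ gives $\sum_k\sup_z\P^z(X(t_k)\in\wh U)\asymp2^{n(\al-1)}$, whence $\E^x[A_n(U)^p]\le p!\,(C2^{n(\al-1)})^p$ for every $p$. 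For $J$ I invoke Theorem \ref{t:gr}: after each exit the process lies at distance $\asymp2^{-n}$ from $U$, and evaluating the estimate at a fixed time $a\asymp T$ gives, uniformly over such starting points and over translates $U$ (the potential-measure ratio being $\asymp1$ at this scale),
\[
\P^{z}\bigl(T_U>a\bigr)\asymp\frac{K(2^{-n})}{a\,\psi^{-1}(1/a)}\asymp 2^{-n(\al-1)} .
\]
Thus each excursion has conditional probability $\gtrsim2^{-n(\al-1)}$ of lasting longer than $a$; since the excursions are disjoint subintervals of $[0,T]$, a Chernoff bound applied to this Markovian sequence of indicators forces the time budget to be exhausted after $\asymp2^{n(\al-1)}$ excursions, giving $\P^x(J>m)\le\exp(-cm2^{-n(\al-1)})$ for $m\ge C2^{n(\al-1)}$.

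Combining the two estimates, both $A_n(U)$ and $J$ concentrate at the scale $2^{n(\al-1)}$ with all polynomial moments bounded by $C_p2^{np(\al-1)}$; Markov's inequality then yields $\P^x(L_n(U)>2\cdot2^{n\de})\le C_p2^{-np(\de-\al+1)}$, and choosing $p>1/(\de-\al+1)$ (possible precisely because $\de>\al-1$) makes the series $\sum_n2^n2^{-np(\de-\al+1)}$ converge, completing the Borel--Cantelli scheme. I expect the main obstacle to be exactly this concentration step rather than the first moment: the return times produced by Theorem \ref{t:gr} are heavy-tailed, so control above the mean must be extracted through the excursion/Chernoff mechanism above, and one must verify that all comparison constants---in Theorem \ref{t:gr}, in the density bound $p(s,0)\asymp\psi^{-1}(1/s)$, and in $K(2^{-n})\asymp2^{-n(\al-1)}$---are uniform across the dyadic scales $2^{-n}$ and the translates $U$. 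This uniformity is where the global lower scaling $\ls(\al,0,\un c)$ (with $\un\theta=0$, as required by Theorem \ref{t:gr}) is essential, while only the small-scale part of $\us(\al,\theta,\ov C)$ is needed once $n$ is large.
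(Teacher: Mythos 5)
Your proposal is sound and, modulo one patchable detail noted below, gives a correct proof; but its core mechanism differs genuinely from the paper's. The outer shell is the same (a per-$U$ tail bound, a union over the $O(2^n)$ intervals of $\mathcal U_n$ meeting a compact window, Borel--Cantelli, then let the window grow), yet where you split the covering number as $L_n(U)\le A_n(U)+J$ and control the two pieces by Hawkes-type factorial moments and an excursion/Chernoff argument, the paper proceeds in one stroke: it defines successive return times $\tau_k=\inf\{s>\tau_{k-1}+2^{-n\alpha}:X(s)\in U\}$, notes that needing more than $k$ covering intervals forces $\tau_k<T$, and gets $\P^x(\tau_k<T)\le p_n^k$ from the strong Markov property, so everything reduces to the single estimate $1-p_n\ge c_T2^{-n(\alpha-1)}$. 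That estimate is proved by rescaling ($Y_n(t)=2^nX(2^{-n\alpha}t)$): the event $2\le|Y_n(1)|\le3$ has probability bounded below by the transition-density \emph{lower} bound of Theorem \ref{t:bgr} (this is exactly where unimodality enters), and the subsequent avoidance of $[-4,-1]$ for time $T2^{n\alpha}$ has probability $\asymp_T 2^{-n(\alpha-1)}$ by Theorem \ref{t:gr} applied to $\psi_n(\lambda)=2^{-n\alpha}\psi(2^n\lambda)$, which inherits $\ls(\al,0,\un c)$ with the same characteristics. The paper's argument is shorter and needs no moment combinatorics; your argument is heavier but has the interesting feature that the $A_n$ half uses only $p(s,\cdot)\le p(s,0)\lesssim\psi^{-1}(1/s)$, which for a symmetric process under global lower scaling comes free from Fourier inversion, so the density lower bound (and with it unimodality) is confined to a smaller role than in the paper. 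Both proofs extract the crucial rate $2^{-n(\alpha-1)}$ from the same source, the lower-bound direction of Theorem \ref{t:gr}. The detail to patch: after exiting $\wh U$, a pure-jump process does \emph{not} lie at distance $\asymp2^{-n}$ from $U$ --- it can land arbitrarily far --- so you must verify the uniform lower bound $\P^z(T_U>a)\ge c_T2^{-n(\alpha-1)}$ for \emph{all} $z\notin\wh U$; this does follow from Theorem \ref{t:gr}, since for $\alpha>1$ one has $K(x)\asymp V(x)^2/x$ and $V(x-R)K(x)/V(x)$ is, up to constants, increasing in $x$, hence minimized at the boundary scale $x\asymp2^{-n}$, but the step is not automatic and should be spelled out. (A minor further point: your stated tail $\P^x(J>m)\le\exp(-cm2^{-n(\alpha-1)})$ is too strong near $m\asymp2^{n(\alpha-1)}$ because of the polynomial factor counting the at most $T/a$ long crossings; it holds comfortably at $m=2^{n\delta}$ with $\delta>\alpha-1$, which is all you use.)
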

\begin{proof}
{\it Step 1.}  For a fixed $U\in\mathcal U_n$, write $U=
(z-\frac{2^{-n}}{2}, z+\frac{2^{-n}}{2})$ for some $z\in\R$.
Define $\tau_0=0$ and for all integer $k\ge 1$, $$
\tau_{k}=\inf\{s>\tau_{k-1} +2^{-n\alpha}:\, 
X(s)\in U\}
$$
with the convention that $\inf\emptyset=\infty$. From the 
fact that 
$X^{-1}(U)\subset \bigcup_{i=0}^{\infty} [\tau_i,
\tau_i+2^{-n\al}]$, 
we note that for any $T>0$,
\begin{align*}
\{X^{-1}(U)\cap[0,T) \mbox{ cannot be covered by } k 
\mbox{ intervals of length } 2^{-n\al}\}\subset \{\tau_k<T\}.
\end{align*}
Due to the right continuity of the sample paths of $X$, we 
observe that $X(\tau_{k-1})$ belongs to the closure of $U$ 
as $\tau_{k-1}<T$. By the strong Markov property, we obtain
\begin{align*}
\P^x(\tau_k<T) &= \P^{x}(\tau_{k}< T | \tau_{k-1}< T) 
\P^x(\tau_{k-1}<T) \\
&\le \sup_{y\in\ov U}\P^y\left( \inf_{2^{-n\alpha} 
\leq t\leq T}|X(s)-z|\leq \frac{2^{-n}}{2}\right)
\P^x(\tau_{k-1}<T) \\
&\le  \sup_{y\in\ov U}\P^y\left( \inf_{2^{-n\alpha} 
\leq t\leq T}|X(s)-y|\leq 2^{-n}\right)
\P^x(\tau_{k-1}<T).
\end{align*}
Define a sequence of intermediate processes 
$Y_n=\{2^nX(2^{-n\al}t), t\ge 0\}$. 
It follows from the spatial homogeneity of L\'evy processes that for any $y\in\R$,
\begin{align*}
\P^y\left( \inf_{2^{-n\alpha} \leq t\leq T}|X(s)-y|
\leq 2^{-n}\right)
= \P\left(\inf_{1\le t\le T2^{n\al}} |Y_n(t)|\le 1\right):=p_n.
\end{align*}
By induction, we obtain
\begin{align*}
\P^x(\tau_k<T)\le p_n^k.
\end{align*}

{\it Step 2.} 
Now we intend to prove that $1-p_n\ge C_T 2^{-n(\al-1)}$, 
thus find an upper bound for $p_n$. Let $T^n_A$ be the hitting time of 
$Y_n$ to any interval $A$. Considering the complement of the event
associated with $p_n$, we have by the independence of increments that
\begin{align*}
1-p_n&\ge \P\Big(2\le |Y_n(1)|\le 3,\ \  \inf\{t\ge 1: Y_n(t)-Y_n(1)
\in[-4,-1]\} \ge T2^{n\al}+1\Big) \\
&= \P \big(2\le |Y_n(1)|\le 3\big)\,  \P\big(T^n_{[-4,-1]}>T2^{n\al}\big).
\end{align*}

We proceed by looking for lower bounds for these events on $Y_n$. First,
\begin{align*}
\P(2\le |Y_n(1)|\le 3) = \P( 2\cdot 2^{-n} \le |X(2^{-n\al})|
\le 3\cdot 2^{-n})
\ge \int_{2\cdot 2^{-n}}^{3\cdot 2^{-n}} p(2^{-n\al},x)dx.
\end{align*}
For $n$ sufficiently large, $2^{-n\al}\psi^*(\theta/r_0)<1$ and 
$3\cdot 2^{-n}<r_0/\theta$.
Since $\psi\in \ls(\alpha,0,\underline{c})
\cap\us(\alpha,\theta,\ov C)$ with $1
< \alpha <2$, it follows from \cite[Remark 4]{BGR} we have $\psi^{-1}\in \us(1/\alpha, 0, \underline{c}^{-1/\alpha})\cap \ls(1/\alpha, \psi (\ov\theta), \ov{C}^{-1/\alpha})$.
Note that for $x\in [2\cdot 2^{-n}, 3\cdot 2^{-n}]$ and $t=2^{-n\alpha}$ 
we have $\psi^{-1}(2^{n\alpha})\asymp 2^{n}$. Since $\psi\in \ls(\alpha,0,\underline{c})
\cap\us(\alpha,\theta,\ov C)$, together with \cite[Proposition 2]{BGR}, we have $\frac{t\psi^{*}(1/|x|)}{|x|}
\asymp 2^{-n(\alpha-1)}\psi^{*}(2^{n})\asymp 2^{n}$. This shows that
%$\psi^{-1}(1/t)$ is comparable to $\frac{t\psi^{*}(1/|x|)}{|x|}$ fdor $t=2^{-n\alpha}$ and $x\in [2\cdot 2^{-n}, 3\cdot 2^{-n}]$.
$\psi^{-1}(1/t)$ is comparable to $\frac{t\psi^{*}(1/|x|)}{|x|}$ for $t=2^{-n\alpha}$ and $x\in [2\cdot 2^{-n}, 3\cdot 2^{-n}]$.
It follows from Theorem \ref{t:bgr} and the weak lower scaling property of 
$\psi$ that  $p(2^{-n\al},x)\ge \frac{2^{-n\al}\psi^*(1/x)}{x}\ge c 2^n$ 
on $x\in [2\cdot 2^{-n},
3\cdot 2^{-n}]$.  
Therefore, 
$$
\P(2\le |Y_n(1)|\le 3)\ge c>0
$$ 
uniformly for all sufficiently large $n$.

Second, observe that $Y_n$ under $\P$ has the characteristic 
exponent $\psi_n(\la)=2^{-n\al} \psi(2^n\la)$, and it is easy 
to check that $\psi_n\in \ls(\al,0,\un c)$ with the same scaling
characteristics as those of $\psi$. Hence,  by applying the  spatial homogeneity of 
$X$ and Theorem 
\ref{t:gr} with $x=\frac{5}{2}$,
$R=\frac{3}{2}$, $t=T 2^{n\al}$, we arrive at
\begin{align*}
\P\Big(T^n_{[-4,-1]}>T2^{n\ov\al} \Big)
= \P^{5/2}\left(T_{[-3/2,3/2]}^{n}>T2^{n\alpha}\right)
\asymp \frac{1}
{T2^{n\al}\psi^{-1}_n(T^{-1}2^{-n\al})}.
\end{align*}
Note that if $f(x)=a h(bx)$, then $f^{-1}(x)=b^{-1}h^{-1}(a^{-1}x)$.
Applying this relation to $f=\psi_n$, $h=\psi$, $a=2^{-n\al}$, 
$b=2^n$,  we obtain $\psi_n^{-1}(T^{-1} 2^{-n\al}) = 2^{-n}\psi^{-1}
(2^{n\al}\cdot T^{-1}2^{-n\al}) =2^{-n}\psi^{-1}(T^{-1})$.
Therefore,
\begin{align*}
\P \Big(T^n_{[-4,-1]}>T2^{n\al} \Big)\asymp C_T 2^{-n(\al-1)},
\end{align*}
as desired.

{\it Step 3.}  Define the event $A_{n}^{\delta}(T)$ by
\[
\begin{split}
A_{n}^{\delta}(T) & = \Big\{\exists U\in \mathcal{U}_{n}\cap 
[-K,K] \text{ s.t. } X^{-1}(U)\cap [0,T] \text{ cannot be
} \\
& \qquad \qquad  \qquad \qquad  \qquad \qquad   
\text{ covered by } \ 2^{n\delta} \text{ intervals of length }
 2^{-n\alpha}\Big\},
\end{split}
\]
where $U\in \mathcal{U}_{n}\cap [-K,K]$ means that $U \in 
\mathcal{U}_n$ and $U\subset [-K,K]$. 
We have for $\delta>\al -1$,
\begin{eqnarray*}
&&\sum_{n=1}^{\infty}\P^{x}(A_{n}^{\delta}(T)) \leq \sum_{n=1}^{\infty}
2K2^{n}(p_{n})^{2^{n\delta}}\leq 2K\sum_{n=1}^{\infty}2^{n}\big(1-c_T 2^{-n(\alpha-1)}\big)^{2^{n\delta}}\\
&=&2K\sum_{n=1}^{\infty}\exp\left(n(\ln 2)-C_T 2^{n(\de-\alpha+1)}\right)<\infty.
\end{eqnarray*}
The conclusion for all $U\subset[-K,K]\cap \mathcal U_n$ follows from the Borel-Cantelli Lemma.
Letting $K\to\infty$ completes the proof.
\end{proof}

\noindent\textbf{Proof of Theorem \ref{Thm:H} i)}

Suppose that $\psi\in \ls(\al , 0, \un c)\cap \us(\al, 
\theta, \ov C))$. 
%Once the covering principle (Lemma 
%\ref{l:cov_pr}) is in place, one can repeat the
%proof on page 5 of \cite{SXY} and prove that
%new
The proof is similar to the proof of \cite[Theorem 1.1]{SXY} and we provide the details for the reader's convenience. 
Let $F$ be any Borel set and take $\gamma>\dimh F$ and $\delta>\alpha-1$.
There exists a sequence of intervals $\{U_{i}\}$ of length $2^{-n_{i}}$ such that $F\subset \cup_{i=1}^{\infty}U_{i}$ and $\sum_{i=1}^{\infty}2^{-n_{i}\gamma}<1$. 
For any $T>0$ it follows from Lemma \ref{l:cov_pr}, we have
$$
X^{-1}(F)\cap [0,T]\subset \bigcup_{i=1}^{\infty}\bigcup_{k=1}^{2\cdot 2^{n_{i}\delta}}I_{i,k},
$$
where $I_{i,k}$ are in $\mathcal{D}_{n_{i}}(\alpha)$. This implies
$$
\sum_{i=1}^{\infty}\sum_{k=1}^{2\cdot 2^{n_{i}\delta}}\text{diam}(I_{i,k})^{\frac{\gamma+\delta}{\alpha}}
=2\cdot 2^{n_{i}\delta}\sum_{i=1}^{\infty}(2^{-n_{i}\alpha})^{\frac{\gamma+\delta}{\alpha}}
=2\sum_{i=1}^{\infty}2^{-n_{i}\gamma}<\infty.
$$
Hence, 
$$
\dimh \left(X^{-1}(F)\cap [0,T]\right)\leq \frac{\gamma +\delta}{\alpha}.
$$
%end new
%for any $\gamma>\dimh F$ and $\delta>\alpha-1$. 
Letting 
$\gamma \downarrow \dimh F$, $\delta\downarrow \alpha-1$ 
and $T\uparrow\infty$ (all along rational numbers) gives
$$
\dimh X^{-1}(F)\leq \frac{\dimh F +\alpha -1}{\alpha}.
$$
\qed

Now we establish the upper bound for the packing dimension.
By the second point of Remark \ref{r:1.6}, it suffices to 
prove the upper bound of Theorem \ref{Thm:P}.

\noindent\textbf{Proof of Theorem \ref{Thm:P} i)}

We will first prove that for any given $T>0$,
\beq\label{eqn:boxstep1}
\P^{x}\left(\dimb (X^{-1}(F)\cap [0,T]) \leq 1 -\frac{1}
{\alpha} +\frac{\dimb(F)}{\alpha}\right)=1.
\eeq
Let $\theta>\dimb(F)$ so that
$$
N_{2^{-n}}(F)2^{-n\theta}\to 0 \text{ as } n\to\infty.
$$
Then, there exist intervals $\{U_{i}\}$ with length $2^{-n}$ 
such that $F\subset \cup_{i=1}^{N_{2^{-n}}(F)}U_{i}$.
Let $\delta >\alpha-1$.
It follows from Lemma \ref{l:cov_pr} that for all $n$ sufficiently 
large,
$$
X^{-1}(F)\cap [0,T] \subseteq \bigcup_{i=1}^{N_{2^{-n}}(F)} 
\Big(X^{-1}(U_{i})\cap [0,T] \Big) \subseteq 
\bigcup_{i=1}^{N_{2^{-n}}(F)}\bigcup_{k=1}^{2\cdot 2^{n\delta}} V_{ik},
$$
where $|V_{ik}|=2^{-n\alpha}$.
Hence, 
$$
N_{2^{-n\alpha}}(X^{-1}(F)\cap [0,T])
\leq 2^{1+n\delta}\, N_{2^{-n}}(F).
$$
Let $d=\frac{\theta+\delta}{\alpha}$. 
Then we have
$$
N_{2^{-n\alpha}}(X^{-1}(F)\cap [0,T])(2^{-n\alpha})^{d}\leq 2 N_{2^{-n}}(F)2^{-n\theta}\to 0 \text{ as } n\to\infty
$$
and this implies that $\dimb X^{-1}(F)\cap [0,T] \leq d$ a.s.
Letting $\theta \downarrow \dimb (F)$ and $\delta \downarrow  \alpha-1$ prove \eqref{eqn:boxstep1}.

Now take any cover of $F\subset \bigcup F_{n}$. It follows from 
\eqref{eqn:pstable}, \eqref{e:dim_comparison}, and \eqref{eqn:boxstep1} 
that
\begin{equation}\label{eqn:inter}
\begin{split}
\dimp \big(X^{-1}(F)\cap [0,T]\big)&\leq\dimp \bigcup_{n=1}^{\infty}\big(X^{-1}(F_{n})\cap [0,T]\big)
= \sup_{n}\dimp \big(X^{-1}(F_{n})\cap [0,T]\big)\\
&\leq\sup_{n}\dimb \big(X^{-1}(F_{n})\cap [0,T]\big)\leq 1-\frac{1}{\alpha}+\frac{\dimb F_{n}}{\alpha}.
\end{split}
\end{equation}
Since the left hand side of \eqref{eqn:inter} does not depend on $\{F_{n}\}$, an application of \eqref{eqn:regularization} yields
$$
\P^{x}\left(\dimp \left(X^{-1}(F)\cap [0,T]\right)\leq 1-\frac{1}{\alpha}+\frac{\dimp F}{\alpha}\right)=1.
$$
Finally, we let $T\to \infty$ and this proves \eqref{eqn:mainubp}.

\section{Proof for the lower bound}

We move to prove the lower bound in \eqref{eqn:mainlb}. We first 
establish a uniform H\"older-type condition for the local times of
such processes by using the method of moments which is
similar to Khoshnevisan et al \cite{KXZ} or Xiao \cite{X}.

We first recall the \textit{lower index} $\beta^{\text{low}}$
of an arbitrary L\'evy process introduced by Blumenthal and 
Getoor \cite{BG61}, 
$$
\beta^{\text{low}}=\sup \Big\{\gamma\geq 0 : \lim_{|\xi|\to\infty} 
\|\xi\|^{-\gamma} \text{Re}\psi(\xi)=\infty \Big\},
$$
where $\text{Re}\psi(\xi)$ represents the real part of its 
argument. Since the process $X$ is symmetric, we have 
$\text{Re}\psi(\xi)=\psi(\xi)$. Also notice that when $\psi
\in \ls(\underline{\alpha},\underline{\theta}, \underline{c})$ 
we have $\beta^{\text{low}}\geq \underline{\alpha}$.

Let $A\subset [0,\infty)$ be a Borel set and define the 
occupation measure $\mu_{A}(\cdot)$ by
$$
\mu_{A}(\cdot)=m\big(\{t\in A : X(t)\in \cdot\} \big),
$$
where $m(\cdot)$ is the Lebesgue measure in $\R$.
For any Borel set $A\subset [0,\infty)$, if $ \mu_{A} \ll m$,  then we define a local time 
of $X$ on $A$ by    
$$
L(x,A):=\frac{d\mu_{A}}{dm}(x).
$$
In this paper, we only consider $A = [0, \infty)$ and, in this case, Hawkes \cite{H86} 
showed that a necessary and sufficient condition for 
the existence of local times of a  L\'evy process $X$ with exponent 
$\psi$ is ${\rm Re}\big( \frac 1 {1 + \psi(\xi)}\big) \in L^1(\R)$.
We will write $L(x,t)$ for $L(x,[0,t])$. If there is a 
modification of the local time such that it is continuous 
in $(x, t)$, we say that $X$ has a jointly continuous local 
time. Necessary and suffcient conditions for the joint continuity of the local times of 
L\'evy processes have been proved by Barlow and Hawkes \cite{BH}, Barlow \cite{Barlow},
and by  Marcus and Rosen \cite{MR92}  using different methods.

Since $\psi \in \ls(\underline{\alpha},0, \underline{c})$ 
it follows from \cite[Theorem 2.1]{KXZ} that there exists 
a square integrable local time for $X$. We also note that 
under the current setting (with $N=1$) the proof of 
\cite[Theorem 3.2]{KXZ} holds true. Consequently,  
\cite[Equations (3.16) and (3.17)]{KXZ} hold true as well. 
This establishes 
the following estimates for the local time of $X$.

\begin{lemma}{{\cite[Lemma 4.2]{KXZ}}}\label{lemma:local times}
Suppose that $X$ is a symmetric unimodal L\'evy process and 
$\psi\in \ls(\underline{\alpha}, \underline{\theta}, \underline{c})$ 
with $\underline{\alpha}>1$. For any $\gamma \in (0,\, 
\frac{\underline{\alpha}-1}{2})$, there exist constants $b_{1},b_{2}>0$ 
and $0<K_{1}, K_{2}<\infty$ such that for any interval $I=[a,a+h]$, 
$a\geq 0$, $x,y\in \R$, and $u>0$,
$$
\P\left(L(X_{t}+x, I) \geq  h^{1-\frac{1}{\underline{\alpha}}} 
u^{\frac{1}{\underline{\alpha}}}\right)\leq K_{1}e^{-b_{1}u},
$$
and
$$
\P\left(\left|L(X_{t}+x, I)-L(X_{t}+y, I)\right| \geq  h^{1-
\frac{1+\gamma}{\underline{\alpha}}} |x-y|^{\gamma}u^{\frac{1+\gamma}
{\underline{\alpha}}}\right)\leq K_{2}e^{-b_{2}u},
$$
where either $t=0$ or $a$.
\end{lemma}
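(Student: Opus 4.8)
The plan is to follow the method of moments exactly as in Khoshnevisan, Xiao, and Zhang \cite{KXZ} and Xiao \cite{X}. First I would reduce both displayed inequalities to uniform-in-level tail estimates for the local time $L(z,[0,h])$ of $X$ started at $0$, $z\in\R$. When $t=a$, the Markov property at time $a$ represents $L(X_a+x,[a,a+h])$ as the local time at level $x$ over $[0,h]$ of the post-$a$ process $\{X(a+s)-X(a)\}_{s\ge0}$, which is a copy of $X$ independent of $\mathcal F_a$; when $t=0$, the same device gives $L(x,[a,a+h])=\widetilde L(x-X_a,[0,h])$ at the $\mathcal F_a$-measurable level $x-X_a$. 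Since the tail bounds I aim for are uniform over deterministic levels, conditioning on $\mathcal F_a$ disposes of both cases. The analytic starting point is the Fourier representation $L(z,[0,h])=\frac1{2\pi}\int_0^h\!\int_\R e^{-i\xi z}e^{i\xi X(s)}\,d\xi\,ds$, from which, symmetrizing over the $k!$ orderings of the time variables and using independence of increments, one obtains
\begin{align*}
\E\big[L(z,[0,h])^k\big]=\frac{k!}{(2\pi)^k}\int_{0<s_1<\cdots<s_k<h}\int_{\R^k}e^{-iz\sum_j\xi_j}\prod_{m=1}^k e^{-(s_m-s_{m-1})\psi(v_m)}\,d\bar\xi\,d\bar s,
\end{align*}
with $s_0=0$ and $v_m=\xi_m+\cdots+\xi_k$.

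The core is then to establish the two moment bounds, uniformly in $z,w\in\R$,
\begin{align*}
\E\big[L(z,[0,h])^k\big]&\le C^k(k!)^{1/\underline{\alpha}}\,h^{k(1-1/\underline{\alpha})},\\
\E\big[|L(z+w,[0,h])-L(z,[0,h])|^k\big]&\le C^k(k!)^{(1+\gamma)/\underline{\alpha}}\,h^{k(1-(1+\gamma)/\underline{\alpha})}\,|w|^{k\gamma}.
\end{align*}
For the first, I would bound $|e^{-iz\sum_j\xi_j}|=1$, change variables $\xi\mapsto v$ (Jacobian one) to decouple the frequency integrals through the scaling estimate $\int_\R e^{-\tau\psi(v)}\,dv\asymp\psi^{-1}(1/\tau)\asymp\tau^{-1/\underline{\alpha}}$ (a consequence of the weak lower scaling of $\psi$, equivalently the weak upper scaling of $\psi^{-1}$), and evaluate the remaining Dirichlet integral $\int_{0<s_1<\cdots<s_k<h}\prod_m(s_m-s_{m-1})^{-1/\underline{\alpha}}\,d\bar s=\Gamma(1-1/\underline{\alpha})^k\,h^{k(1-1/\underline{\alpha})}/\Gamma(1+k(1-1/\underline{\alpha}))$; Stirling's formula turns the prefactor $k!/\Gamma(1+k(1-1/\underline{\alpha}))$ into $C^k(k!)^{1/\underline{\alpha}}$. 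For the increment I would insert $\prod_j|e^{-i\xi_j(z+w)}-e^{-i\xi_j z}|$, bound each factor by $2^{1-\gamma}|w|^\gamma|\xi_j|^\gamma$, write $|\xi_j|^\gamma=|v_j-v_{j+1}|^\gamma\le|v_j|^\gamma+|v_{j+1}|^\gamma$, and expand: each of the $2^k$ resulting terms carries weights $|v_m|^{\gamma e_m}$ with $e_m\in\{0,1,2\}$ and $\sum_m e_m=k$, and the weighted integral $\int_\R|v|^{\gamma e_m}e^{-\tau\psi(v)}\,dv\asymp\tau^{-(1+\gamma e_m)/\underline{\alpha}}$ again yields, after Dirichlet integration, exactly the exponent $k(1-(1+\gamma)/\underline{\alpha})$ on $h$ together with growth $(k!)^{(1+\gamma)/\underline{\alpha}}$.

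Finally I would pass from moments to tails by Markov's inequality and optimization over $k$. Writing $M=h^{1-1/\underline{\alpha}}$, the estimate $\P\big(L(z,[0,h])\ge Mu^{1/\underline{\alpha}}\big)\le (Mu^{1/\underline{\alpha}})^{-k}\E[L^k]\le C^k(k!)^{1/\underline{\alpha}}u^{-k/\underline{\alpha}}$, combined with Stirling and the choice $k\asymp u$ (with a proportionality constant small relative to $C$), collapses the right-hand side to $K_1e^{-b_1u}$; for small $u$ the bound is trivial after enlarging $K_1$. The increment bound follows identically with $\underline{\alpha}$ replaced by $\underline{\alpha}/(1+\gamma)$, which is exactly why the threshold in the second inequality is $\lambda=h^{1-(1+\gamma)/\underline{\alpha}}|w|^\gamma u^{(1+\gamma)/\underline{\alpha}}$.

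I expect two steps to be the main obstacles. The first is extracting the frequency-integral comparability $\int_\R|v|^\beta e^{-\tau\psi(v)}\,dv\asymp\tau^{-(1+\beta)/\underline{\alpha}}$ in the weak-scaling regime: unlike the exact-stable case $\psi(v)=|v|^\alpha$ of \cite{SXY}, here $\psi\in\ls(\underline{\alpha},\underline{\theta},\underline c)$ controls $\psi$ only for large arguments, so the low-frequency part, and — when $\underline{\theta}>0$ — the requirement that $h$ be small enough that $\psi^{-1}(1/\tau)$ lands in the scaling range, need separate handling. The second, more delicate point is the admissible range of $\gamma$: the largest weight arising in the expansion of $\prod_j|\xi_j|^\gamma$ is $|v_m|^{2\gamma}$, whose frequency integral converges precisely when $2\gamma<\underline{\alpha}-1$, which is exactly the hypothesis $\gamma\in(0,(\underline{\alpha}-1)/2)$; keeping the $2^k$ expansion terms and their associated $\Gamma$-factors under uniform control while respecting this threshold is the combinatorial heart of the argument.
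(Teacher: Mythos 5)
Your proposal is correct and is essentially the paper's own route: the paper does not reprove the lemma but imports it from \cite{KXZ}, observing that the moment estimates \cite[Equations (3.16) and (3.17)]{KXZ} — precisely the two bounds $\E[L^k]\le C^k(k!)^{1/\underline{\alpha}}h^{k(1-1/\underline{\alpha})}$ and its increment analogue that you derive via the Fourier representation, the weak lower scaling of $\psi$, and Dirichlet integration — remain valid when $\psi\in\ls(\underline{\alpha},\underline{\theta},\underline{c})$ with $\underline{\alpha}>1$ (this is the content of \cite[Theorem 3.2]{KXZ} with $N=1$), after which the exponential tails follow by exactly the Chebyshev--Stirling optimization over $k\asymp u$ that you describe. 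One slip in your commentary, which does not affect the validity of the machinery you set up: the hypothesis $2\gamma<\underline{\alpha}-1$ is forced not by convergence of the frequency integral $\int_{\R}|v|^{2\gamma}e^{-\tau\psi(v)}\,dv$ (which converges for every $\gamma\ge 0$, since lower scaling gives exponential decay in $\lambda$ after the substitution $v=\lambda\psi^{-1}(1/\tau)$), but by integrability of the resulting factor $(s_m-s_{m-1})^{-(1+2\gamma)/\underline{\alpha}}$ in the Dirichlet time integral, which requires $(1+2\gamma)/\underline{\alpha}<1$.
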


We need to estimate the local oscillation of the process $X$, which is given in the following lemma.  
It is a direct consequence of a general result for L\'evy-type processes. 

\begin{lemma}[{\cite[Th. 5.1]{BSW}}]\label{l:bsw}
Assume that $X$ is a symmetric L\'evy process. Then, there exists a constant $c$ such that for 
any $t,\,r>0$,
\begin{align*}
\P \Big(\sup_{0\leq s\le t } |X(s)|>r \Big) \le c t \psi^*(1/r).
\end{align*}
\end{lemma}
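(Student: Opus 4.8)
The plan is to prove the bound in two stages: first a tail estimate for the one-dimensional marginal $X(s)$, and then an upgrade to the running supremum by a maximal inequality tailored to processes with stationary and independent increments. Neither stage uses the scaling hypotheses, consistent with the statement being for a general symmetric L\'evy process.

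\textbf{Stage 1 (marginal tail).} I would start from the classical characteristic-function tail estimate: for any probability measure $\mu$ on $\R$ with characteristic function $\widehat\mu$ and any $a>0$,
\[
\mu\{x:|x|\ge 2/a\}\le \frac1a\int_{-a}^{a}\bigl(1-\mathrm{Re}\,\widehat\mu(\xi)\bigr)\,d\xi ,
\]
which follows by Fubini from $\int_{-a}^a(1-e^{i\xi x})\,d\xi=2a\,(1-\tfrac{\sin(ax)}{ax})$ together with the elementary bound $1-\tfrac{\sin u}{u}\ge\tfrac12$ for $|u|\ge 2$. Since $X$ is symmetric, $\widehat{\mu_s}(\xi)=\E[e^{i\xi X(s)}]=e^{-s\psi(\xi)}$ is real, and $1-e^{-s\psi(\xi)}\le s\psi(\xi)$ because $\psi\ge0$. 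Taking $a=2/r$ and bounding $\psi(\xi)\le \psi^*(2/r)$ on the range of integration yields $\P(|X(s)|\ge r)\le c\,s\,\psi^*(2/r)$. To replace the argument $2/r$ by $1/r$ I would invoke the universal inequality $\psi(2\xi)\le 4\psi(\xi)$, which holds for every symmetric exponent because $1-\cos 2u=8\sin^2(u/2)\cos^2(u/2)\le 4(1-\cos u)$ and $\psi(\xi)=2\int(1-\cos x\xi)\,\nu(dx)$; this gives $\psi^*(2/r)\le 4\,\psi^*(1/r)$, so $\P(|X(s)|\ge r)\le c\,s\,\psi^*(1/r)$ after enlarging $c$.

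\textbf{Stage 2 (maximal inequality).} Let $\tau=\inf\{s>0:|X(s)|>r\}$, so that $\{\sup_{0\le s\le t}|X(s)|>r\}=\{\tau\le t\}$. By right-continuity $|X(\tau)|\ge r$ on $\{\tau\le t\}$, hence on $\{\tau\le t\}\cap\{|X(t)-X(\tau)|\le r/2\}$ one has $|X(t)|\ge r/2$. The strong Markov property and stationarity of increments give $\P(|X(t)|\ge r/2)\ge \P(\tau\le t)\,\inf_{0\le v\le t}\P(|X(v)|\le r/2)$, and therefore
\[
\P\Big(\sup_{0\le s\le t}|X(s)|>r\Big)=\P(\tau\le t)\le \frac{\P(|X(t)|\ge r/2)}{1-\sup_{0\le v\le t}\P(|X(v)|>r/2)} .
\]
If the supremum in the denominator exceeds $1/2$, then by Stage 1 we already have $c\,t\,\psi^*(1/r)\ge 1/2$ and the asserted inequality holds trivially after enlarging $c$; otherwise the denominator is at least $1/2$, and combining it with the Stage 1 bound on the numerator (again using $\psi^*(2/r)\le 4\psi^*(1/r)$) gives $\P(\sup_{s\le t}|X(s)|>r)\le 2\,\P(|X(t)|\ge r/2)\le c\,t\,\psi^*(1/r)$, as desired.

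The step I expect to be delicate is precisely the passage from the marginal to the supremum, because $X$ has jumps: unlike Brownian motion, the process can leap over the level $r$, so a clean reflection principle is unavailable and one cannot control the overshoot $|X(\tau)|-r$. The Ottaviani-type argument circumvents this entirely, since it uses only that $|X(\tau)|\ge r$ and that the post-$\tau$ increment is small, never the size of the overshoot, at the cost of the harmless case distinction on the denominator. The remaining bookkeeping (tracking universal constants and normalizing the argument of $\psi^*$ via $\psi(2\xi)\le 4\psi(\xi)$) is routine and does not affect the form of the estimate; alternatively, since the statement is quoted from \cite[Th.~5.1]{BSW}, one may simply invoke it, the present setting being the case of a constant symbol $q(x,\xi)=\psi(\xi)$.
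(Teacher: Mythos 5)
Your proof is correct, but it takes a genuinely different route from the paper, which offers no argument at all for this lemma: it is quoted directly as Theorem 5.1 of the B\"ottcher--Schilling--Wang monograph, where a maximal inequality of the form $\P^x\big(\sup_{s\le t}|X(s)-x|\ge r\big)\le c\,t\sup_{|y-x|\le r}\sup_{|\xi|\le 1/r}|q(y,\xi)|$ is proved for general L\'evy-type (Feller) processes with symbol $q(x,\xi)$, via Dynkin's formula applied to suitable cut-off functions; the present lemma is the constant-symbol case $q(x,\xi)=\psi(\xi)$, exactly as your closing remark observes. Your two-stage substitute is complete and elementary: Stage 1 (the classical truncation inequality $\mu\{|x|\ge 2/a\}\le a^{-1}\int_{-a}^{a}(1-\mathrm{Re}\,\widehat\mu(\xi))\,d\xi$, combined with $1-e^{-s\psi}\le s\psi$ and the doubling bound $\psi(2\xi)\le 4\psi(\xi)$, which indeed follows from $1-\cos 2u\le 4(1-\cos u)$) correctly yields $\P(|X(s)|\ge r)\le c\,s\,\psi^*(1/r)$, and Stage 2 is a sound Ottaviani-type argument: the strong Markov property at the hitting time $\tau$, the right-continuity fact $|X(\tau)|\ge r$ on $\{\tau\le t\}$, and the case split on whether $\sup_{v\le t}\P(|X(v)|>r/2)$ exceeds $1/2$ all check out, and you are right that the argument never needs to control the overshoot, which is why it survives the jumps. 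What the paper's citation buys is brevity and a statement valid for state-dependent symbols; what your proof buys is transparency in the symmetric L\'evy case --- it isolates precisely where symmetry enters (it makes $e^{-s\psi(\xi)}$ real with $\psi\ge 0$, so no median or centering correction is needed in the Ottaviani step, unlike for asymmetric processes) and gives explicit universal constants. In effect you have rederived Pruitt's classical maximal estimate specialized to symmetric exponents, which is a perfectly acceptable self-contained alternative to invoking \cite[Th.~5.1]{BSW}.
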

If $\psi\in \us(\delta, \overline{\theta}, \overline{C})$, we have a versatile version of Lemma \ref{l:bsw}.
\begin{lemma}\label{lemma:fluctuation}
Suppose that $\psi\in \us(\delta, \overline{\theta}, \overline{C})$ with $\de\in(0,2]$.
Then for $r\leq \frac{1}{\overline{\theta}}$ we have
$$
\P\Big(\sup_{0\leq s \leq t}|X(s)|>r \Big)\leq c tr^{-\delta}.
$$
\end{lemma}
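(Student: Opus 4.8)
The plan is to combine Lemma \ref{l:bsw} with the weak upper scaling condition, so that the only real work is to convert the bound involving the maximal function $\psi^*$ into a power of $r$. By Lemma \ref{l:bsw} there is a constant $c$ with $\P(\sup_{0\le s\le t}|X(s)|>r)\le ct\,\psi^*(1/r)$ for all $t,r>0$; hence it suffices to show that $\psi\in\us(\de,\ov\theta,\ov C)$ forces $\psi^*(1/r)\le c' r^{-\de}$ whenever $r\le 1/\ov\theta$, equivalently $\psi^*(y)\le c' y^\de$ for all $y\ge\ov\theta$.

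To obtain this power bound, I would first upgrade the weak upper scaling of $\psi$ to the same scaling for its maximal function, namely $\psi^*(\lam\theta)\le\ov C\lam^\de\psi^*(\theta)$ for every $\lam\ge1$ and $\theta>\ov\theta$. Writing $\psi^*(\lam\theta)=\sup_{0\le x\le\lam\theta}\psi(x)$ and splitting the supremum at $x=\theta$: for $x\le\theta$ one has $\psi(x)\le\psi^*(\theta)\le\ov C\lam^\de\psi^*(\theta)$ since $\ov C\lam^\de\ge1$, while for $\theta<x\le\lam\theta$ one writes $x=(x/\theta)\theta$ with $x/\theta\in(1,\lam]$ and applies the scaling hypothesis (valid because $\theta>\ov\theta$) to get $\psi(x)\le\ov C(x/\theta)^\de\psi(\theta)\le\ov C\lam^\de\psi^*(\theta)$. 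Taking the supremum over $x$ yields the claim.

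With the scaling for $\psi^*$ in hand, fix any reference point $\theta_0>\ov\theta$; note $\psi^*(\theta_0)<\infty$ because $\psi$ is continuous on the compact interval $[0,\theta_0]$. For $y\ge\theta_0$, setting $\lam=y/\theta_0$ gives $\psi^*(y)\le\ov C\,\theta_0^{-\de}\,\psi^*(\theta_0)\,y^\de$, and for $\ov\theta\le y<\theta_0$ one simply bounds $\psi^*(y)\le\psi^*(\theta_0)\le\psi^*(\theta_0)\,\ov\theta^{-\de}\,y^\de$ using $y\ge\ov\theta$ and $\de>0$. Taking the larger of the two constants produces $\psi^*(y)\le c'y^\de$ for all $y\ge\ov\theta$, which upon substituting $y=1/r$ is exactly the desired estimate. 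Combining with Lemma \ref{l:bsw} then gives $\P(\sup_{0\le s\le t}|X(s)|>r)\le cc'\,t\,r^{-\de}$ for $r\le1/\ov\theta$.

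The substantive point — and the only place where some care is required — is the passage from the scaling of $\psi$ to that of $\psi^*$ together with the handling of the threshold: the hypothesis controls $\psi$ only at arguments exceeding $\ov\theta$, so the contributions of $x\le\ov\theta$ (where $\psi$ is merely bounded) must be absorbed into the power $y^\de$, which is possible precisely because $y$ is bounded below by $\ov\theta$ and $\de>0$ makes $y\mapsto y^\de$ increasing. In the degenerate case $\ov\theta=0$ the restriction $r\le1/\ov\theta$ is vacuous, and the same computation with the global version of the scaling delivers the bound for all $r>0$.
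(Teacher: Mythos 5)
Your core argument is correct, and it takes a genuinely different route from the paper's. The paper also starts from Lemma \ref{l:bsw}, but then eliminates the maximal function by invoking \cite[Proposition 1]{G}, which gives $\psi^{*}(|x|)\leq 12\,\psi(x)$ --- a peak-to-value comparison that relies on unimodality of $X$ --- and afterwards applies the weak upper scaling to $\psi(1/r)$ itself at the threshold, arriving at the bound $c\,t\,\psi(\ov\theta)\,\ov\theta^{-\de}\,r^{-\de}$. You instead work with $\psi^{*}$ throughout: you first show that the maximal function inherits $\us(\de,\ov\theta,\ov C)$ with the same constants (your split of the supremum at $x=\theta$ is exactly right, since $\ov C\lam^{\de}\geq 1$ handles $x\leq\theta$ and the hypothesis handles $\theta<x\leq\lam\theta$), and then convert the scaling into the power bound $\psi^{*}(y)\leq c'y^{\de}$ on $[\ov\theta,\infty)$ via a reference point $\theta_{0}>\ov\theta$; the two-regime treatment of $[\ov\theta,\theta_{0})$ correctly sidesteps the fact that the scaling hypothesis is only available for $\theta>\ov\theta$ strictly. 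What your route buys is generality: it uses only symmetry (through Lemma \ref{l:bsw}) and never needs $\psi^{*}\leq 12\,\psi$, so it establishes the lemma for symmetric L\'evy processes without unimodality --- a comparison of that kind genuinely requires unimodality (e.g.\ $\nu=\delta_{1}+\delta_{-1}$ gives $\psi(\xi)=2(1-\cos\xi)$, for which $\psi^{*}\leq c\,\psi$ fails at the zeros of $\psi$). The paper's route is shorter once the cited inequality is granted.

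The one genuine error is your closing remark about the case $\ov\theta=0$. Global weak upper scaling does \emph{not} deliver $\psi^{*}(y)\leq c'y^{\de}$ for all $y>0$: take $\psi(\xi)=|\xi|^{\de'}$ with $0<\de'<\de$, which lies in $\us(\de,0,1)$ because $\lam^{\de'}\leq\lam^{\de}$ for $\lam\geq 1$, yet $\psi^{*}(y)=y^{\de'}$ and $y^{\de'}/y^{\de}\to\infty$ as $y\downarrow 0$. Probabilistically, for the symmetric $\de'$-stable process one has $\P\big(\sup_{0\leq s\leq t}|X(s)|>r\big)\asymp 1\wedge t r^{-\de'}$, which exceeds any bound $c\,t\,r^{-\de}$ for fixed $t$ and $r$ large, so the conclusion of the lemma itself fails for all $r>0$ in this example. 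The obstruction is structural: upper scaling controls $\psi$ only upward from a base point, so when $\ov\theta=0$ there is no positive lower bound on $y=1/r$ with which to absorb the small-$y$ (large-$r$) regime --- exactly the absorption your $\ov\theta^{-\de}$ step performs when $\ov\theta>0$. Note that the paper's own computation degenerates in the same way there ($\psi(\ov\theta)\,\ov\theta^{-\de}$ becomes $0\cdot\infty$), so the lemma should be read with $\ov\theta>0$, as it is in all of its applications; your main argument is complete and correct in that case, and the $\ov\theta=0$ remark should simply be deleted.
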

\pf
Applying \cite[Proposition 1]{G} to our process $X$, we have
\beq\label{eqn:fluctuation3}
\psi^{*}(|x|)\leq 12 \psi(x), \quad x\in\R.
\eeq
Hence it follows from Lemma \ref{l:bsw}, 
the weak upper scaling condition,  and \eqref{eqn:fluctuation3} that for  
$r\leq \frac{1}{\ov\theta}$
\[
\begin{split}
&\P\bigg(\sup_{s\leq t}|X(s)|>r \bigg)\leq \ov Ct\psi^{*}(1/r)\leq 12\ov C t\psi(1/r)
\leq c\,t\psi(\overline{\theta})
\overline{\theta}^{-\delta}r^{-\delta}.
\end{split}
\]
This proves the lemma. \qed

Now we are ready to prove a uniform H\"older condition for
the local times for $X$. The proof is similar to that of 
\cite[Theorem 4.3]{KXZ}  or \cite[Theorems 1.1 and 1.2]{X} 
with obvious modifications. We provide the details for the 
reader's convenience. For an interval $I\subset [0,\infty)$ 
we define $L^{*}(I)=\sup_{x\in \R}L(x,I)$ to be the 
maximum local time of $X$ on $I$.

\begin{thm}\label{thm:HolderLC}
Suppose that $X$ is a symmetric unimodal L\'evy process in $\R$
and its characteristic exponent $\psi$ satisfies
$$
\psi\in \ls(\alpha,\underline{\theta},
\underline{c})\cap \us(\alpha, \ov{\theta}, \overline{C})
 $$ 
with $\alpha >1$ and for some constants $\underline{c}, \overline{C}>0$. 
Let $L$ be its jointly continuous local time 
and fix $\tau>0$ and $N>0$. Then we have
\beq\label{eqn:Holder continuity1}
\limsup_{r\to 0}\frac{L^{*}([\tau-r, \tau+r])}{r^{1-\frac{1}
{\alpha}}(\ln\ln r^{-1})^{\frac{1}{\alpha}}}<\infty,
\eeq
and
\beq\label{eqn:Holder continuity2}
\limsup_{r\to 0}\sup _{I\subset [0,N], m(I)<r}\frac{L^{*}(I)}
{r^{1-\frac{1}{\alpha}}\ln(r^{-1})^{\frac{1} {\alpha}}}<\infty.
\eeq
\end{thm}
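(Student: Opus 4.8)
The plan is to establish both uniform modulus-of-continuity statements by a standard chaining/Borel--Cantelli argument built on the exponential tail estimates of Lemma \ref{lemma:local times}, combined with the fluctuation bound of Lemma \ref{lemma:fluctuation} to pass from local times on dyadic intervals to local times on \emph{arbitrary} small intervals. The key point enabling this passage is that the supremum $L^*(I)=\sup_{x\in\R}L(x,I)$ is attained (up to small error) over a bounded range of $x$-values determined by where $X$ actually travels during $I$, so the spatial supremum can be discretized.

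First I would prove \eqref{eqn:Holder continuity2}, the global statement on $[0,N]$. Fix a small $\eps>0$ and discretize: for each integer $n$ let $r=2^{-n}$, and partition $[0,N]$ into dyadic intervals $I_{n,j}=[j2^{-n},(j+1)2^{-n}]$, $0\le j< N2^{n}$. On each such interval I would apply the first inequality of Lemma \ref{lemma:local times} with $h=2^{-n}$ and the choice $u=c\,n$ (i.e. $u$ proportional to $\ln(r^{-1})$), giving
\begin{align*}
\P\left(L(X_{a}+x,I_{n,j})\ge C\,2^{-n(1-\frac1\alpha)}(n\ln 2)^{\frac1\alpha}\right)\le K_{1}e^{-b_{1}cn}.
\end{align*}
Summing over the $O(N2^{n})$ dyadic intervals, choosing $c$ large enough that $b_1 c>\ln 2$, makes the total probability summable in $n$, so by Borel--Cantelli the event fails only finitely often. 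This controls $L(X_a+x,\cdot)$ for a fixed spatial argument; to get $L^*(I)=\sup_x L(x,I)$ I would use the second (H\"older-in-space) inequality of Lemma \ref{lemma:local times} to interpolate over a fine grid of $x$-values, together with Lemma \ref{lemma:fluctuation} to confine the relevant $x$-range: since $X$ moves at most $O((2^{-n}\ln)^{1/\alpha})$ over an interval of length $2^{-n}$ with overwhelming probability, only spatial arguments within that window contribute to $L^*$. Finally I would upgrade from dyadic intervals to an arbitrary interval $I$ with $m(I)<r$: such an $I$ is covered by a bounded number of consecutive dyadic intervals of comparable length, and $L^*$ is subadditive over intervals, so the dyadic bound transfers up to a constant factor.

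The statement \eqref{eqn:Holder continuity1} is then proved by exactly the same scheme localized at the single point $\tau$, except that one takes $r=2^{-n}$ along a geometric sequence and replaces $\ln(r^{-1})$ by $\ln\ln(r^{-1})$: here the number of intervals to union-bound over at scale $n$ is $O(1)$ (just the interval around $\tau$) rather than $O(2^n)$, so the required tail decay is only $u\sim\ln\ln(r^{-1})$, i.e. $u=c\ln n$, which by Lemma \ref{lemma:local times} yields probability $\lesssim K_1 e^{-b_1 c\ln n}=K_1 n^{-b_1 c}$, summable once $c$ is large. The main obstacle I anticipate is the spatial-supremum step: controlling $\sup_{x\in\R}L(x,I)$ rather than a pointwise value requires both the modulus-of-continuity estimate in $x$ from Lemma \ref{lemma:local times} and a careful use of Lemma \ref{lemma:fluctuation} to truncate the spatial range, and one must verify that the oscillation penalty incurred by the chaining in $x$ does not exceed the target rate $r^{1-1/\alpha}(\ln r^{-1})^{1/\alpha}$. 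Because the H\"older exponent $\gamma$ in Lemma \ref{lemma:local times} can be taken arbitrarily close to $(\alpha-1)/2>0$, the spatial chaining contributes only a lower-order term and the target rate is preserved; verifying this quantitatively is the crux of the argument.
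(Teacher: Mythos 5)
Your outline for \eqref{eqn:Holder continuity1} is essentially the paper's argument: a spatial window of radius $2^{-n/\alpha}n^{\beta}$ around $X_s$ justified by Lemma \ref{lemma:fluctuation} (failure probability of order $n^{-\alpha\beta}$, summable once $\beta>1/\alpha$), a grid of spacing $2^{-n/\alpha}(\ln n)^{-1/\alpha}$, the first tail inequality of Lemma \ref{lemma:local times} with $u\asymp\ln\ln r^{-1}$, and binary chaining in $x$ via the second inequality with $u\asymp k\ln n$, the geometric series converging precisely because $\gamma<\frac{\alpha-1}{2}$. The dyadic-to-arbitrary-interval passage by covering with boundedly many dyadic intervals and additivity of $L(x,\cdot)$ is also fine.

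The genuine gap is in your treatment of \eqref{eqn:Holder continuity2}: confining the spatial range \emph{per dyadic interval} by Lemma \ref{lemma:fluctuation} does not survive the union bound. For a window of radius $\rho_n=2^{-n/\alpha}n^{\beta}$ over an interval of length $2^{-n}$, the lemma gives failure probability of order $2^{-n}\rho_n^{-\alpha}=n^{-\alpha\beta}$ --- only \emph{polynomially} small in $n$, because the fluctuation tail of a stable-like process is a power law, not exponential (this is not ``overwhelming probability''). Summed over the $N2^{n}$ dyadic intervals this is of order $2^{n}n^{-\alpha\beta}\to\infty$, and the confinement event genuinely fails: at scale $n$ the process makes roughly $2^{n}n^{-\alpha\beta}$ jumps of size exceeding $\rho_n$ in $[0,N]$, so a great many dyadic intervals leave their windows. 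No polylogarithmic (or even polynomially shrinking) window radius fixes this; making $2^{n}\cdot 2^{-n}\rho_n^{-\alpha}$ summable forces $\rho_n\gtrsim n^{(1+\eps)/\alpha}$, i.e., a window that \emph{grows} with $n$. That is exactly how the paper closes the step: it abandons per-interval confinement, invokes Lemma \ref{l:bsw} to get $\sup_{0\le t\le 1}|X_t|<\infty$ a.s.\ (so $L(y,B)=0$ for $|y|>n$ eventually), and grids the whole window $\{|x|\le n\}$ with spacing $\eta_n=2^{-n/\alpha}n^{-1/\alpha}$ --- about $n^{1+1/\alpha}2^{n/\alpha}$ points, exponentially many --- compensating with $u=a_3 n\ln 2$ in Lemma \ref{lemma:local times}, whose exponential tail $e^{-b_1a_3n\ln 2}$ beats the $2^{n(1+1/\alpha)}$-sized union bound for $a_3$ large (and likewise $u=a_4kn$ in the spatial chaining). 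Your choice $u\asymp n$ is correct, but without replacing the local window by this global one, the spatial-supremum step --- which you rightly flag as the crux --- does not close.
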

\pf
We first prove \eqref{eqn:Holder continuity1}.
Let $g(r)=r^{1-\frac{1}{\alpha}}(\ln\ln\frac{1}{r})^{\frac{1}{\alpha}}$
and $C_{n}=[s, s +\frac{1}{2^{n}}]$ for any $s\ge 0$. We will prove that
\beq\label{eqn:Holder continuity3}
\limsup_{n\to\infty}\frac{L^{*}(C_{n})}{g(2^{-n})}<\infty.
\eeq
Since $2^{-n/\alpha}n^{\beta}<\theta$ for all but finitely
many $n$'s, it follows from Lemma \ref{lemma:fluctuation}
and the fact $\psi\in \us(\alpha,\theta, \ov C)$ 
that for all sufficiently large $n$, 
$$
\P\Big(\sup_{t\in C_{n}} |X_{t}-X_{s}|>2^{-n/\alpha}
n^{\beta}\Big)\leq 2^{-n}(2^{-n/\alpha}n^{\beta})^{\alpha}
=n^{-\alpha\beta}.
$$
We choose $\beta>\frac{1}{\alpha}$.
Then, the Borel-Cantelli lemma gives that a.s.
$$
\sup_{t\in C_{n}} |X_{t}-X_{s}|\leq 2^{-n/\alpha}n^{\beta}
$$
for all but finitely many $n$'s.

Now let $\theta_{n}:=2^{-n/\alpha}(\ln n)^{-1/\alpha}$.
Define
$$
G_{n}=\{x : |x|\leq 2^{-n/\alpha}n^{\beta}, x=p\theta_{n} 
\text{ for some } p\in \mathbb{Z}  \}.
$$
Note that the number of elements in $G_{n}$ is at most
$\frac{2^{-n/\alpha}n^{\beta}}{\theta_{n}}=n^{\beta}(\ln n)^{1/\alpha}$.
Choose a constant $a_{1}$ so that $b_{1}a_{1}-\beta >1$, where 
$b_{1}$ is a constant from Lemma \ref{lemma:local times}.
Then, Lemma \ref{lemma:local times} implies 
\[
\begin{split}
&\P\left(\max_{x\in G_{n}} L(X_{s}+x, C_{n})\geq a_{1}^{1/\alpha}g(2^{-n})\right)\\
&=\P\left(\max_{x\in G_{n}} L(X_{s}+x, C_{n})\geq (2^{-n})^{1-\frac{1}{\alpha}}(a_{1}\ln\ln 2^{n})^{1/\alpha}\right)\\
&=(\# \text{ of elements in } G_{n}) \times K_{1}e^{-b_{1}a_{1}\ln\ln 2^{n}}\\
&\leq n^{\beta}(\ln n)^{1/\alpha}\times K_{1}e^{-b_{1}a_{1}\ln\ln 2^{n}}
= K_{1}e^{-b_{1}a_{1}\ln\ln 2}(\ln n)^{1/\alpha}n^{-(b_{1}a_{1}-\beta)},
\end{split}
\]
and since $b_{1}a_{1}-\beta >1$ we have
$\sum_{n=1}^{\infty}K_{1}e^{-b_{1}a_{1}\ln\ln 2}(\ln n)^{1/\alpha}
n^{-(b_{1}a_{1}-\beta)}<\infty$.
Hence, again the Borel-Cantelli lemma yields
\beq\label{eqn:Holder1}
\max_{x\in G_{n}} L(X_{s}+x, C_{n})<a_{1}^{1/\alpha}g(2^{-n})
\eeq
for all but finitely many $n$'s.

Choose $\gamma$ so that $\gamma<\frac{\alpha-1}{2}$.
Let
$$
B_{n}=\displaystyle \bigcup_{k=1}^{\infty}
\bigcup_{y_{1}, y_{2}} \Big\{\left| L(X_{s}+y_{1}, C_{n})-
L(X_{s}+y_{2}, C_{n})\right|  \geq (2^{-n})^{1-\frac{1+\gamma}
{\alpha}}|y_{1}-y_{2}|^{\gamma} (a_{2}k\ln n)^{\frac{1+\gamma}{\alpha}}\Big\},
$$
where $y_{1}$ and $y_{2}$ are lattice points in $G_{n}$ 
that satisfy $y_{1}-y_{2}=\theta_{n}2^{-k}$. Note that for 
each $k$ there are $2^{k}$ such
pairs of $y_{1}$ and $y_{2}$.
It follows from Lemma \ref{lemma:local times} that
\[
\begin{split}
\P(B_{n}) &=n^{\beta}(\ln n)^{\frac{1}{\alpha}}\sum_{k=1}^{\infty}
2^{k}K_{2}e^{-b_{2}a_{2}k\ln n}\leq 3K_{2}(\ln n)^{\frac{1}{\alpha}}
n^{-(b_{2}a_{2}-\beta)},
\end{split}
\]
where we have used the fact that
$$
\sum_{k=1}^{\infty}2^{k}n^{-b_{2}a_{2}k}=\sum_{k=1}^{\infty}\left(\frac{2}{n^{b_{2}a_{2}}}\right)^{k}
=\frac{\frac{2}{n^{b_{2}a_{2}}}}{1-\frac{2}{n^{b_{2}a_{2}}}}=\frac{2}{n^{b_{2}a_{2}}-2}\leq 3n^{-b_{2}a_{2}}
$$
for all sufficiently large $n$. 
Hence, by taking $a_{2}$ so 
that $b_{2}a_{2}-\beta>1$ we have by the Borel-Cantelli lemma
$$
\P \Big(\limsup_{n}B_{n}\Big)=0.
$$

Now suppose that $|y|<2^{-\frac{n}{\alpha}}n^{\beta}$.
Then, we can express $y$ as 
$y=\lim_{k\to\infty}y_{k}$, where $y_{0}=x$ and $y_{k}=x+\theta_{n}
\sum_{j=1}^{k}\eps_{j}2^{-j}$. 
Hence, it follows from the triangular 
inequality that on the event $\big\{\limsup_{n}B_{n}\big\}^{c}$ 
and $n$ sufficiently large,
\begin{equation}\label{eqn:Holder2}
\begin{split}
|L(X_{s}+y, C_{n})-L(X_{s}+x, C_{n})| &\leq\sum_{k=1}^{\infty}|L(X_{s}+y_{k}, C_{n})-L(X_{s}+y_{k-1}, C_{n})| \\
&\leq\sum_{k=1}^{\infty}(2^{-n})^{1-\frac{1+\gamma}{\alpha}}|y_{k}-y_{k-1}|^{\gamma}(a_{2}k\ln n)^{\frac{1+\gamma}{\alpha}}\\ 
&\leq\sum_{k=1}^{\infty}(2^{-n})^{1-\frac{1+\gamma}{\alpha}}\left(2^{-\frac{n}{\alpha}}(\ln n)^{-\frac{1}
{\alpha}}2^{-k}\right)^{\gamma}(a_{2}k\ln n)^{\frac{1+\gamma}{\alpha}}\\
&= (2^{-n})^{1-\frac{1}{\alpha}}(\ln n)^{\frac{1}{\alpha}}a_{2}^{\frac{1+
\gamma}{\alpha}}\sum_{k=1}^{\infty}2^{-k\gamma}k^{\frac{1+\gamma}{\alpha}}\leq c\, g(2^{-n})
\end{split}
\end{equation}
for some finite constant $c>0$. Hence \eqref{eqn:Holder continuity3} 
follows from \eqref{eqn:Holder1} and \eqref{eqn:Holder2}.

Next we prove \eqref{eqn:Holder continuity2}.
For simplicity we may and will assume that $I\subset [0,1]$. Let $\mathcal{D}_{n}$ be a collection of $2^{n}$ non-overlapping
intervals in $[0,1]$ of length $\frac{1}{2^{n}}$.
Define $h(r)=r^{1-\frac{1}{\alpha}}(\ln r^{-1})^{\frac{1}{\alpha}}$.
Let $\eta_{n}:=2^{-\frac{n}{\alpha}}n^{-\frac{1}{\alpha}}$ and define
$$
H_{n}:=\{x\in \R:\, |x|\leq n, \ x=\eta_{n}p, \ p\in \mathbb{Z}\}.
$$
Note that the cardinality $\#(H_{n})$ of $H_{n}$ satisfies
$$
\# H_{n}\leq \frac{2n}{\eta_{n}}=\frac{2n}{2^{-\frac{n}{\alpha}}n^{-\frac{1}{\alpha}}}
=2n^{1+\frac{1}{\alpha}}2^{\frac{n}{\alpha}}.
$$
It follows from Lemma \ref{lemma:local times} that
\[\begin{split}
&\P\left(\max_{x\in H_{n}} L(x, B)\geq a_{3}^{\frac{1}{\alpha}}h(2^{-n}) \text{ for some } B\in \mathcal{D}_{n}\right)\\
&\leq \P\left(\max_{x\in H_{n}} L(x, B)\geq (2^{-n})^{1-\frac{1}{\alpha}}(a_{3}n\ln 2)^{\frac{1}{\alpha}}
\text{ for some } B\in \mathcal{D}_{n}\right)\\
&\leq 2n^{1+\frac{1}{\alpha}}2^{\frac{n}{\alpha}} \times 2^n  K_{1}e^{-b_{1}a_{3}n\ln 2}= 2K_{1}
n^{1+\frac{1}{\alpha}}2^{-n(a_{3}b_{1}-\frac{1}{\alpha})}.
\end{split}
\]
Hence, by taking $a_{3}$ so that $a_{3}b_{1}-\frac{1}{\alpha}>0$ we have
\beq\label{eqn:Holder continuity4}
\max_{x\in H_{n}} L(x, B)< a_{3}^{\frac{1}{\alpha}}h(2^{-n}) \text{ for all } B\in\mathcal{D}_{n}
\eeq
for all sufficiently large $n$.

Let
$$
D_{n}=\displaystyle \bigcup_{k=1}^{\infty}
\bigcup_{y, y'} \Big\{\left| L(y, B)-L(y', B)\right|  \geq (2^{-n})^{1-\frac{1+\gamma}{\alpha}}
|y-y'|^{\gamma}(a_{4}k n)^{\frac{1+\gamma}{\alpha}}
\text{ for some } B\in \mathcal{D}_{n}\Big\},
$$
where $y$ and $y'$ are lattice points in $H_n$ that satisfy
 $y-y'=\eta_{n}2^{-k}$.
Hence, by Lemma \ref{lemma:local times} we have
\[
\begin{split}
\P(D_{n})&\leq 2n^{1+\frac{1}{\alpha}}2^{\frac{n}{\alpha}}\sum_{k=1}^{\infty}2^{k}e^{-b_{2}a_{4}kn}
=2n^{1+\frac{1}{\alpha}}2^{\frac{n}{\alpha}}\sum_{k=1}^{\infty}\left(\frac{2}{e^{b_{2}a_{4}n}}\right)^{k}\\
&\leq 2n^{1+\frac{1}{\alpha}}2^{\frac{n}{\alpha}}\frac{\frac{2}{e^{-b_{2}a_{4}n}}}{1-\frac{2}{e^{-b_{2}a_{4}n}}}
=2n^{1+\frac{1}{\alpha}}2^{\frac{n}{\alpha}}\frac{2}{e^{b_{2}a_{4}n}+2}\\
&\leq 6n^{1+\frac{1}{\alpha}}2^{\frac{n}{\alpha}}e^{-b_{2}a_{4}n}\leq 6n^{1+\frac{1}{\alpha}}e^{-n(b_{2}
a_{4}-\frac{\ln 2}{\alpha})}
\end{split}
\]
for all sufficiently large $n$'s. By taking $a_{4}$ so that $b_{2}a_{4}-\frac{\ln 2}{\alpha}>0$
we see from the Borel-Cantelli lemma that $\P\big(\limsup_{n}D_{n}\big)=0$.

By Lemma \ref{l:bsw} we have
$$
\P \Big(\sup_{0\leq t\leq 1}|X_{t}|=\infty \Big)=\lim_{n\to\infty}\P\Big(\sup_{0\leq t\leq 1}|X_{t}|>n\Big)
\leq \lim_{n\to\infty}c\psi^{*}(1/n)=0.
$$
Hence, $\P(\sup_{0\leq t\leq 1}|X_{t}|<\infty)=1$ and a.s. for each $\omega$ there exists $n$ 
such that $\sup_{0\leq t\leq 1}|X_{t}|\leq n$.
Hence, for $n$ sufficiently large, if $|y|>n$, then $L(y,[0,1])=0$.
For $|y|\leq n$, on the event $\big\{\limsup_{n}D_{n}\big\}^{c}$
we can express $y$ as $y=\lim_{k\to\infty}y_{k}$ with $y_{0}=x$ as before. 
Hence, for any $B\in \mathcal{D}_{n}$ and $x\in H_n$ we have
\begin{equation}\label{eqn:Holder continuity5}
\begin{split}
\left|L(x,B)-L(y,B)\right| &= \sum_{k=1}^{\infty}\left|L(y_{k}, B)- L(y_{k-1}, B)\right| \\
&\leq\sum_{k=1}^{\infty}(2^{-n})^{1-\frac{1+\gamma}{\alpha}}
|y_{k}-y_{k-1}|^{\gamma}(a_{4}kn)^{\frac{1+\gamma}{\alpha}} \\
&\leq \sum_{k=1}^{\infty}(2^{-n})^{1-\frac{1+\gamma}{\alpha}}
\left|2^{-\frac{n}{\alpha}} n^{-\frac{1}{\alpha}}2^{-k}\right|^{\gamma}(a_{4}kn)^{\frac{1+\gamma}{\alpha}} \\
&=a_{4}^{\frac{\gamma}{\alpha}}(2^{-n})^{1-\frac{1}
{\alpha}}n^{\frac{1}{\alpha}}
\sum_{k=1}^{\infty}2^{-k\gamma}k^{\frac{1+\gamma}{\alpha}} \leq c_{2}h(2^{-n})
\end{split}
\end{equation}
for some constant $c_{2}>0$. Now \eqref{eqn:Holder continuity2} follows from 
\eqref{eqn:Holder continuity4} and \eqref{eqn:Holder continuity5}.
\qed

The following lemma is an analogue of \cite[Lemma 3.1]{SXY}.
\begin{lemma}\label{lemma:covering2}
Suppose that $\psi\in\us(\overline{\alpha}, \overline{\theta}, \overline{C})$ and let 
$\gamma<\frac{1}{\overline{\alpha}}$.
Then, there exists a constant $K$ such that $\P^{x}$-a.s., for all  $n$
 sufficiently large, $X(I)$ can be covered by $K$ intervals of length 
 $2^{-n\gamma}$ for all $I\in \mathcal{C}_{n}$.
\end{lemma}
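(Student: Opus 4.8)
The plan is to cover the range $X(I)$ by controlling the number of \emph{macroscopic oscillations} of $X$ over the time interval $I$, rather than the total oscillation of $X$ on $I$ (which a single large jump could render huge). Recall that each $I\in\mathcal C_n$ has length $2^{-n}$, and fix $\gamma<1/\ov\al$; the covering constant $K$ will depend only on $\gamma$ and $\ov\al$. For a fixed $I=[a,a+2^{-n}]$, I would introduce the successive oscillation times $S_0=a$ and $S_{j+1}=\inf\{t>S_j:\,|X(t)-X(S_j)|\ge 2^{-n\gamma}\}$, which are stopping times by the right-continuity of the paths, and set $O_I=\#\{j\ge 1:\, S_j\le a+2^{-n}\}$. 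The key geometric observation is that on each piece $[S_{j-1},S_j)$ one has $|X(t)-X(S_{j-1})|<2^{-n\gamma}$, so $X([S_{j-1},S_j))$ lies in an interval of length $2\cdot 2^{-n\gamma}$; since $X(I)$ is the union of the $O_I+1$ such pieces, $X(I)$ can be covered by at most $2(O_I+1)$ intervals of length $2^{-n\gamma}$. A single large jump only increases $O_I$ by one and so costs only a bounded number of covering intervals; this is precisely why counting oscillations, rather than measuring the diameter of $X(I)$, is the correct device.

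The next step is a tail bound for $O_I$. Since $\{O_I\ge K\}=\{S_K\le a+2^{-n}\}\subseteq\bigcap_{j=1}^K\{S_j-S_{j-1}\le 2^{-n}\}$, I would apply the strong Markov property successively at the $S_{j-1}$ together with spatial homogeneity to get
$$
\P^x(O_I\ge K)\le \Big(\sup_{y}\P^y\big(\textstyle\sup_{0\le s\le 2^{-n}}|X(s)-y|\ge 2^{-n\gamma}\big)\Big)^{K}=\Big(\P\big(\textstyle\sup_{0\le s\le 2^{-n}}|X(s)|\ge 2^{-n\gamma}\big)\Big)^K.
$$
Because $\psi\in\us(\ov\al,\ov\theta,\ov C)$ and $2^{-n\gamma}\le 1/\ov\theta$ for large $n$, Lemma \ref{lemma:fluctuation} bounds the inner probability by $c\,2^{-n}(2^{-n\gamma})^{-\ov\al}=c\,2^{-n(1-\gamma\ov\al)}$, which tends to $0$ precisely because $\gamma<1/\ov\al$. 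Hence $\P^x(O_I\ge K)\le c^K 2^{-nK(1-\gamma\ov\al)}$.

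Finally I would sum over the $I\in\mathcal C_n$ contained in a fixed $[0,T]$ (at most $T2^n+1$ of them) and over $n$,
$$
\sum_{n}\sum_{I\in\mathcal C_n,\,I\subset[0,T]}\P^x(O_I\ge K)\le \sum_n (T2^n+1)\,c^K 2^{-nK(1-\gamma\ov\al)}<\infty,
$$
which is finite as soon as the integer $K$ is chosen with $K>1/(1-\gamma\ov\al)$. The Borel--Cantelli lemma then gives that $\P^x$-a.s., for all sufficiently large $n$ and every $I\in\mathcal C_n$ with $I\subset[0,T]$, one has $O_I<K$, so $X(I)$ is covered by at most $2K$ intervals of length $2^{-n\gamma}$; letting $T\to\infty$ through integers and relabelling the constant proves the lemma. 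The only real obstacle is the reasoning in the first paragraph: recognizing that the jumps of $X$ force one to replace the diameter of $X(I)$ by the oscillation count $O_I$. Once that is in place, the estimate reduces cleanly to the fluctuation bound of Lemma \ref{lemma:fluctuation} and a Borel--Cantelli argument of the same flavor as in Lemma \ref{l:cov_pr}.
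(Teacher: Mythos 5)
Your proof is correct and takes essentially the same route as the paper: the key estimate is the single-interval fluctuation bound $\P^x\big(\sup_{0\le s\le 2^{-n}}|X_s-x|\ge 2^{-n\gamma}\big)\le c\,2^{-n(1-\gamma\ov\al)}$ from Lemma \ref{lemma:fluctuation}, followed by a covering and Borel--Cantelli argument. The only difference is that the paper outsources the oscillation-counting step to \cite[Lemma 2.1]{SXLJ}, whereas you reconstruct that cited lemma inline (correctly) via the stopping times $S_j$ and the bound $\P^x(O_I\ge K)\le p_n^K$, making the argument self-contained.
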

\pf
It follows from Lemma \ref{lemma:fluctuation} that for all $n$ such that 
$2^{-n}\leq \frac{1}{\overline{\theta}}$
$$
\P^{x}\bigg(\sup_{0\leq s\leq 2^{-n}}|X_{s}-x|\geq 2^{-n\gamma}\bigg)
\leq c2^{-n}(2^{-n\gamma})^{-\overline{\alpha}}
=c2^{-n(1-\gamma\overline{\alpha})}.
$$
Now the conclusion of the lemma follows from \cite[Lemma 2.1]{SXLJ}.
\qed

Now we are ready to prove Theorem \ref{Thm:H} $\textrm{ii)}$.

\noindent\textbf{Proof of Theorem \ref{Thm:H} ii)}

The proof is almost identical to the proof of \cite[Theorem 1.1]{SXY} 
using Theorem \ref{thm:HolderLC} and Lemma \ref{lemma:covering2}
instead of \cite[Lemmas 3.1 and 3.2]{SXY}
% and we omit the details.
%new 
and we briefly sketch main steps. 
For any Borel set $F\subset \R$ one can find a probability measure $\mu$ supported on $F$ with $\mu(B)\leq \text{diam}(B)^{\dimh F-\eps}$ for any $B$ with $\text{diam}(B)\leq 1$. Define a measure $\lam$ supported on $\R^{+}$ as in \cite[Equation (3.2)]{SXY}. Then, using Theorem \ref{thm:HolderLC} and Lemma \ref{lemma:covering2} one can argue that the measure $\lam$ satisfies 
$\lam(B)\leq \text{diam}(B)^{1-\frac{1}{\alpha}+\gamma\dimh F-2\eps}$ for any $\gamma<1/\alpha$ and all Borel sets $B$ with sufficiently small diameter.  
This shows 
$$
\P^{x}\left(\dimh X^{-1}(F)\geq 1-\frac{1}{\alpha}+\gamma\dimh F -2\eps \text{ for all compact Borel sets } F\right)=1
$$
and letting $\gamma\uparrow \frac{1}{\alpha}$, then  $\eps\downarrow0$ establishes the claim.
%end new

\section{Examples}

In this section we provide some interesting examples to 
illustrate applications of the results of this paper. We recall 
from \cite{BGR} that a function $f:I\to \R$ is said to be almost 
increasing with a factor $c\in (0,1]$ if $cf(x)\leq  f(y)$ for 
all $x,y \in I$ and $x\leq y$. A function $f:I\to \R$ is said 
to be almost decreasing with a factor $C\in [1,\infty)$ if 
$Cf(x)\geq f(y)$ for all $x,y\in I$ and $x\leq y$. 
Finally, we recall the following characterizations for weakly scaling conditions.

\begin{lemma}{\cite[Lemma 11]{BGR}}\label{lemma:characterization}
We have $\phi\in \ls(\un{\alpha}, \un{\theta}, \un{c})$ if and only if 
$\phi(\theta)=\kappa(\theta)\theta^{\un{\alpha}}$ and
$\kappa$ is almost increasing on $(\un{\theta},\infty)$ with an 
oscillation factor $\un{c}$. Similarly, $\phi\in \us(\ov{\alpha},
\ov{\theta}, \ov{C})$ if and only if $\phi(\theta)=\kappa(\theta)
\theta^{\ov{\alpha}}$ and $\kappa$ is almost decreasing
on $(\ov{\theta},\infty)$ with an oscillation factor $\ov{C}$.
\end{lemma}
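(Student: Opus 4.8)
The plan is to reduce each scaling condition to an almost-monotonicity statement for the auxiliary function $\kappa(\theta) := \phi(\theta)/\theta^{\un{\alpha}}$ (respectively $\kappa(\theta) := \phi(\theta)/\theta^{\ov{\alpha}}$), by dividing the defining scaling inequality through by the appropriate power of its argument. The factorization $\phi(\theta) = \kappa(\theta)\theta^{\un{\alpha}}$ carries no content on its own, since $\kappa$ is \emph{defined} by this identity; the entire substance of the lemma lies in showing that the oscillation factor is preserved under this passage.

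First I would treat the lower scaling case. For the forward direction, assume $\phi \in \ls(\un{\alpha}, \un{\theta}, \un{c})$ and fix $\un{\theta} < x \leq y$. Writing $y = \lambda x$ with $\lambda = y/x \geq 1$ and applying the defining inequality with base point $\theta = x$ gives $\phi(y) \geq \un{c}\,(y/x)^{\un{\alpha}}\phi(x)$; dividing both sides by $y^{\un{\alpha}}$ yields $\kappa(y) \geq \un{c}\,\kappa(x)$, which is exactly the assertion that $\kappa$ is almost increasing on $(\un{\theta},\infty)$ with factor $\un{c}$. For the converse I would run the same computation in reverse: given $\lambda \geq 1$ and $\theta > \un{\theta}$, set $x = \theta$ and $y = \lambda\theta$ (so that $y \geq x > \un{\theta}$), invoke $\un{c}\,\kappa(\theta) \leq \kappa(\lambda\theta)$, and multiply through by $(\lambda\theta)^{\un{\alpha}}$ to recover $\phi(\lambda\theta) \geq \un{c}\,\lambda^{\un{\alpha}}\phi(\theta)$.

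The upper scaling case is entirely parallel, with $\un{\alpha}$ replaced by $\ov{\alpha}$ and all inequalities reversed: dividing $\phi(\lambda x) \leq \ov{C}\,\lambda^{\ov{\alpha}}\phi(x)$ by $(\lambda x)^{\ov{\alpha}}$ converts the upper scaling bound into $\kappa(y) \leq \ov{C}\,\kappa(x)$ for $\ov{\theta} < x \leq y$, i.e.\ $\kappa$ almost decreasing with factor $\ov{C}$, and the reverse substitution recovers the scaling condition.

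There is no genuinely hard step here; the only point requiring care is that the two parameterizations coincide, namely that the map $(\theta,\lambda) \mapsto (x,y) = (\theta, \lambda\theta)$ is a bijection between the admissible pairs $\{\theta > \un{\theta},\ \lambda \geq 1\}$ and $\{\un{\theta} < x \leq y\}$. This ensures that no instance of the inequality is lost in either direction and that the single constant $\un{c}$ (respectively $\ov{C}$) simultaneously plays the role of scaling constant and oscillation factor. Since this correspondence is transparent, the equivalence follows immediately from the two one-line manipulations above.
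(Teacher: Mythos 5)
Your proof is correct and complete: defining $\kappa(\theta)=\phi(\theta)\theta^{-\un{\alpha}}$ (resp.\ $\phi(\theta)\theta^{-\ov{\alpha}}$), dividing the scaling inequality by $(\lambda\theta)^{\un{\alpha}}$ (resp.\ $(\lambda\theta)^{\ov{\alpha}}$), and observing that $(\theta,\lambda)\mapsto(\theta,\lambda\theta)$ is a bijection between $\{\theta>\un{\theta},\,\lambda\ge 1\}$ and $\{\un{\theta}<x\le y\}$ gives both implications with the same constant serving as scaling constant and oscillation factor, exactly as you argue. For the comparison you asked about: the paper offers no proof of this statement at all---it is quoted directly from \cite[Lemma 11]{BGR}---so your two-line verification is precisely the standard argument that the paper delegates to that reference, and there is no substantive difference of route to report.
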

\begin{enumerate}
\item Symmetric stable processes

Let $X^{SS}$ be a symmetric stable L\'evy process in $\R$. 
The characteristic exponent of $X$ is $\psi^{SS}(\xi)
=|\xi|^{\alpha}$, $\alpha\in (0,2]$. When $\alpha=2$, $X^{SS}$  
is a Brownian motion whose sample paths are continuous,
which we exclude in this paper.

If $\alpha\in (1,2)$, then clearly $|\xi|^{\alpha}\in 
\us(\alpha,0,1)\cap \ls(\alpha,0,1)$. Hence, \eqref{eqn:main-equal} and \eqref{eqn:mainlbp} hold.

\item Relativistic stable processes

Let $X^{RS}$ be the relativistic stable process with mass 
$m$ in $\R$. The characteristic exponent of $X$ is given by
$$
\psi^{RS}(\xi)=(\xi^{2} +m^{2/\alpha})^{\alpha/2}-m, \quad 
\xi \in \R^{1}, m>0.
$$
Write
$$
\psi^{RS}(\xi)=|\xi|^{\alpha}\kappa_{1}(\xi), \quad 
\kappa_{1}(\xi)=\frac{(\xi^{2} +m^{2/\alpha})^{\alpha/2}-m}
{|\xi|^{\alpha}}.
$$
It is easy to check that
\beq\label{eqn:RS1}
\lim_{\xi\to\infty}\kappa_{1}(\xi)=1,
\eeq
and
\beq\label{eqn:RS2}
\lim_{\xi\to 0} \frac{\kappa_1(\xi)}{|\xi|^{2-\alpha}}=
\lim_{\xi\to 0} \frac{\left(m(\frac{|\xi|^{2}}
{m^{2/\alpha}}+1)^{2/\alpha}-m\right)}{|\xi|^{2}}
=\frac{2}{\alpha m^{2/\alpha}}.
\eeq
If follows from \eqref{eqn:RS1} and \eqref{eqn:RS2} 
$\kappa_1(\xi)$ is almost increasing on $(0,\infty)$ 
and is almost decreasing on $(\theta, \infty)$ for 
some $\theta>0$. This shows that $\psi^{RS}\in 
\ls(\alpha,0,\un{c})\cap \us(\alpha,\theta, \ov{C}_{1})$ 
from Lemma \ref{lemma:characterization}.
Hence, when $\alpha\in (1,2)$,  \eqref{eqn:main-equal} and \eqref{eqn:mainlbp} hold.

\item Truncated stable processes

Let $X^{TS}$ be the truncated stable L\'evy process. The characteristic 
exponent of $X^{TS}$ is given by
$$
\psi^{TS}(\xi)=\int_{\{0<|y|\leq 1\}}(1-\cos(y\xi))\frac{c(\alpha)}
{|y|^{1+\alpha}}dy,
$$
where $c(\alpha)$ is a constant so that $\int_{\R\setminus \{0\} }(1-\cos(y\xi))\frac{c(\alpha)}{|y|^{1+\alpha}}dy=1$.
By the change of variable $y=\frac{x}{|\xi|}$ we observe that
$$
\psi^{TS}(\xi)=c(\alpha)|\xi|^{\alpha}\int_{\{0<|x|\leq |\xi|\}}
\frac{1-\cos(\frac{\xi x}{|\xi|})}{|x|^{1+\alpha}}dx.
$$
Hence, we have $\psi^{TS}(\xi)\sim |\xi|^{\alpha}$ as $\xi\to \infty$. 
Since $1-\cos(\frac{\xi x}{|\xi|})\sim |x|^{2}$ as $|x|\to 0$, we 
observe that $\psi^{TS}(\xi)\sim c(\alpha)|\xi|^{2}$ as $\xi\to 0$.
Write $\psi^{TS}(\xi)$ as
$$
\psi^{TS}(\xi)=|\xi|^{\alpha}\kappa_{2}(\xi).
$$
Then, we observe that
$$
\lim_{\xi\to 0}\kappa_{2}(\xi)=\lim_{\xi\to 0}\frac{\psi^{TS}(\xi)}
{|\xi|^{\alpha}}=0 \text{ and } \lim_{\xi\to \infty}\kappa_{2}(\xi)=1.
$$
Hence, we see that $\kappa_{2}(\xi)$ is almost increasing on 
$(0,\infty)$ and is almost decreasing on $(\theta,\infty)$ for 
some $\theta>0$.
This shows that $\psi^{TS}(\xi)\in \ls(\alpha, 0, \un{c})
\cap \us(\alpha, \theta, \ov{C}_{1})$.
Hence we conclude that \eqref{eqn:main-equal} and \eqref{eqn:mainlbp} hold. 
\end{enumerate}

\bigskip
\noindent
{\bf Acknowledgements:} Research of Yimin Xiao was partially supported in 
part by the NSF grants DMS-1607089 and DMS-1855185. Xiaochuan Yang was 
supported in part by Luxembourg's National Foundation for Research (FNR), through the AFR project MiSSILe.

%new
The authors thank the anonymous referee for providing some useful suggestions.
%end new

\begin{singlespace}

\end{singlespace}

\end{doublespace}

\vskip 0.3truein

{\bf Hyunchul Park}

Department of Mathematics, State University of New York at New Paltz, NY 12561,
USA

E-mail: \texttt{parkh@newpaltz.edu}

\bigskip

{\bf Yimin Xiao}

Dept. Statistics and Probability, Michigan State University, East Lansing, MI 48824, USA

E-mail: \texttt{xiaoyimi@stt.msu.edu}

\bigskip

{\bf Xiaochuan Yang}

Dept. Statistics and Probability, Michigan State University, East Lansing, MI 48824, USA

Mathematics Research Unit, University of Luxembourg, Grand Duchy of Luxembourg

E-mail: \texttt{xiaochuan.yang@uni.lu}

\end{document}